\documentclass[a4paper, 12pt]{article}

\title{Contact three-manifolds with exactly two simple Reeb orbits}
\date{\today}
\author{Dan Cristofaro-Gardiner\thanks{Partially supported by NSF grants DMS-1711976 and DMS-2105471.}, Umberto Hryniewicz\thanks{Partially supported by the DFG SFB/TRR 191 `Symplectic Structures in Geometry, Algebra and Dynamics', Projektnummer 281071066-TRR 191.},\\ Michael Hutchings\thanks{Partially supported by NSF grant DMS-2005437.},
Hui Liu\thanks{Partially supported by NSFC (Nos. 12022111, 11771341) and
the Fundamental Research Funds for the Central Universities (No. 2042021kf1059).}}

\addtolength{\oddsidemargin}{-.25in}
\addtolength{\evensidemargin}{-.25in}
\addtolength{\textwidth}{0.5in}
\addtolength{\topmargin}{-.25in}
\addtolength{\textheight}{0.5in}

\usepackage[utf8]{inputenc}
\usepackage[english]{babel}
\usepackage[dvipsnames]{xcolor}
\usepackage{amssymb}
\usepackage{latexsym}
\usepackage{amsmath}
\usepackage{amsthm}
\usepackage{amscd}
\usepackage{enumerate}
\usepackage{overpic}
\usepackage[mathscr]{euscript}
\usepackage{amsrefs}

\numberwithin{equation}{section}

\newcommand{\floor}[1]{\left\lfloor #1 \right\rfloor}
\newcommand{\ceil}[1]{\left\lceil #1 \right\rceil}

\renewcommand{\frak}{\mathfrak}

\newtheorem{theorem}{Theorem}[section]
\newtheorem{proposition}[theorem]{Proposition}
\newtheorem{corollary}[theorem]{Corollary}
\newtheorem{lemma}[theorem]{Lemma}
\newtheorem{lemma-definition}[theorem]{Lemma-Definition}

\theoremstyle{definition}
\newtheorem{definition}[theorem]{Definition}
\newtheorem{remark}[theorem]{Remark}

\newtheorem{example}[theorem]{Example}

\newtheorem{notation}[theorem]{Notation}

\newcommand{\eqdef}{\;{:=}\;}

\renewcommand{\frak}{\mathfrak}

\newcommand{\C}{{\mathbb C}}
\newcommand{\Q}{{\mathbb Q}}
\newcommand{\R}{{\mathbb R}}

\newcommand{\Z}{{\mathbb Z}}

\newcommand{\op}{\operatorname}

\newcommand{\Ker}{\op{Ker}}

\newcommand{\tensor}{\otimes}

\renewcommand{\epsilon}{\varepsilon}
\newcommand{\mc}[1]{\mathcal{#1}}

\definecolor{michael}{rgb}{0,.6,0}
\definecolor{umberto}{rgb}{0.773,0.294,0.549}
\definecolor{liu}{rgb}{1.0,0.3,0}
\definecolor{dan}{rgb}{.5,.2,1}

\begin{document}

\maketitle
\begin{abstract}
It is known that every contact form on a closed three-manifold has at least two simple Reeb orbits, and a generic contact form has infinitely many.  We show that if there are exactly two simple Reeb orbits, then the contact form is nondegenerate. Combined with a previous result, this implies that the three-manifold is diffeomorphic to the three-sphere or a lens space, and the two simple Reeb orbits are the core circles of a genus one Heegaard splitting. We also obtain further information about the Reeb dynamics and the contact structure. For example the Reeb flow has a disk-like global surface of section and so its dynamics are described by a pseudorotation; the contact structure is universally tight; and in the case of the three-sphere, the contact volume and the periods and rotation numbers of the simple Reeb orbits satisfy the same relations as for an irrational ellipsoid.
\end{abstract}

\tableofcontents

\section{Introduction}

\subsection{Statement of results}
\label{sec:sor}

Let $Y$ be a closed oriented three-manifold.  Recall that a {\em contact form\/} on $Y$ is a one-form $\lambda$ on $Y$ such that $\lambda \wedge d \lambda > 0$.  A contact form $\lambda$ has an associated {\em Reeb vector field} $R$ defined by the equations
\[
d\lambda(R, \cdot) = 0, \quad \lambda(R) = 1.
\]
A {\em Reeb orbit} is a periodic orbit of $R$, i.e.\ a map
\[
\gamma: \mathbb{R}/ T \mathbb{Z} \longrightarrow Y, \quad \gamma'(t) = R(\gamma(t)),
\]
for some $T>0$, modulo reparametrization of the domain by translations. The number $T$ is the period, also called the {\em symplectic action\/}, of $\gamma$. We say that the Reeb orbit $\gamma$ is {\em simple} if the map $\gamma$ is an embedding. Every Reeb orbit is the $k$-fold cover of a simple Reeb orbit for some positive integer $k$.

The three-dimensional case of the Weinstein conjecture, which was proved in full generality by Taubes \cite{taubes-weinstein}, asserts that a contact form on a closed three-manifold has at least one Reeb orbit; see \cite{tw} for a survey. It was further shown in \cite{onetwo} that a contact form on a closed three-manifold has at least two simple Reeb orbits. This lower bound is the best possible without further hypotheses:

\begin{example}
\label{ex:ellipsoid}
Recall that if $Y$ is a compact hypersurface in $\R^4=\C^2$ which is ``star-shaped'' (transverse to the radial vector field), then the standard Liouville form 
\begin{equation}
\label{eqn:Liouville}
\lambda= \frac{1}{2}\sum_{i=1}^2(x_i\,dy_i-y_i\,dx_i)
\end{equation}
restricts to a contact form on $Y$. If $Y$ is the three-dimensional ellipsoid
\[
\partial E(a,b) = \left\{z\in\C^2\;\bigg|\; \frac{\pi|z_1|^2}{a} + \frac{\pi|z_2|^2}{b} = 1\right\},
\]
and if $a/b$ is irrational, then there are exactly two simple Reeb orbits, corresponding to the circles in $Y$ where $z_2=0$ and $z_1=0$, with periods $a$ and $b$ respectively.

One can also take quotients of the above irrational ellipsoids by finite cyclic group actions to obtain contact forms on lens spaces with exactly two simple Reeb orbits.
\end{example}

It is conjectured that in fact, every contact form on a closed connected three-manifold has either two or infinitely many simple Reeb orbits.  This was proved  in~\cite{cdr} for contact forms that are nondegenerate (see the definition below), extending a result of~\cite{twoinf}. It was also shown by Irie \cite{irie} that for a $C^\infty$-generic contact form on a closed three-manifold, there are infinitely many simple Reeb orbits, and moreover their images are dense in the three-manifold.

The goal of this paper is to give detailed information about the ``exceptional'' case of contact forms on a closed three-manifold with exactly two simple Reeb orbits. 

To state the first result, let $\xi=\op{Ker}(\lambda)$ denote the contact structure determined by $\lambda$. This is a rank $2$ vector bundle with a linear symplectic form $d\lambda$. If $\gamma:\R/T\Z\to Y$ is a Reeb orbit, then the derivative of the time $T$ flow of $R$ restricts to a symplectic linear map
\begin{equation}
\label{eqn:Pgamma}
P_\gamma: (\xi_{\gamma(0)},d\lambda) \longrightarrow (\xi_{\gamma(0)},d\lambda),
\end{equation}
which we call the {\em linearized return map\/}.
We say that $\gamma$ is {\em nondegenerate} if $1$ is not an eigenvalue of $P_\gamma$; this condition is invariant under reparametrization of $\gamma$.  We say that the contact form $\lambda$ is nondegenerate if all Reeb orbits (including non-simple ones) are nondegenerate. The set of nondegenerate contact forms is residual in the set of all contact forms with the $C^\infty$-topology. The Reeb orbit $\gamma$ is called {\em hyperbolic\/} if $P_\gamma$ has eigenvalues in $\R\setminus\{\pm1\}$. The Reeb orbit $\gamma$ is called {\em elliptic} if the eigenvalues of $P_\gamma$ are of the form $e^{\pm 2\pi i \phi}$, and {\em irrationally elliptic} if moreover $\phi$ is irrational. If $\gamma$ is irrationally elliptic, then $\gamma$ and all of its covers are nondegenerate, because the linearized return map for the $k$-fold cover of $\gamma$ has eigenvalues $e^{\pm 2\pi i k \phi}$.

Many results about Reeb dynamics and related questions assume some kind of nondegeneracy hypothesis or allow only certain kinds of degeneracies.  One of the main points of the present work is that we can derive our results without making any such assumption.

\begin{theorem}
\label{thm:nondeg}
Let $Y$ be a closed three-manifold, and let $\lambda$ be a contact form on $Y$ with exactly two simple Reeb orbits. Then $\lambda$ is nondegenerate, and moreover both simple Reeb orbits are irrationally elliptic.
\end{theorem}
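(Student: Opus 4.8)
The plan is to repackage the conclusion and then split the argument into a ``rigidity'' part handled by embedded contact homology (ECH) and a ``nondegeneracy'' part handled by a local analysis near a hypothetical degenerate orbit. Observe first that a nondegenerate simple Reeb orbit is automatically either hyperbolic or irrationally elliptic: if it were elliptic with rational rotation number $\phi=p/q$, then its $q$-fold cover would have $1$ as an eigenvalue of the linearized return map, contradicting nondegeneracy. Since every Reeb orbit covers one of the two simple orbits $\gamma_1,\gamma_2$, it therefore suffices to prove two things: (i) $\lambda$ is nondegenerate; and (ii) neither $\gamma_1$ nor $\gamma_2$ is hyperbolic. Given (i), statement (ii) upgrades ``nondegenerate'' to ``both irrationally elliptic,'' which is exactly the assertion of the theorem.

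Assuming (i), I would establish (ii) using ECH and its volume asymptotics, in the spirit of \cite{onetwo}. Once $\lambda$ is nondegenerate, ECH is defined, and because $\gamma_1,\gamma_2$ (of periods $T_1,T_2$) are the only simple orbits, every ECH generator is automatically an orbit set $\gamma_1^{m}\gamma_2^{n}$, where a hyperbolic orbit may appear with multiplicity at most one. By Taubes's isomorphism, $ECH(Y,\lambda)\cong\widehat{HM}(-Y)$ is infinitely generated, and its ECH capacities obey the Weyl law $c_k(Y,\lambda)^2/k \to 2\,\op{vol}(Y,\lambda)$. If both orbits are elliptic, the generator actions fill out the set $\{mT_1+nT_2 : m,n\ge 0\}$ and $c_k\sim\sqrt{2\,\op{vol}(Y,\lambda)\,k}$, consistently. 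But if $\gamma_2$ is hyperbolic, the admissible actions are $\{mT_1+nT_2 : n\in\{0,1\}\}$, a union of two arithmetic progressions, so $c_k$ grows linearly in $k$ and $c_k^2/k\to\infty$, contradicting the Weyl law; and if both orbits are hyperbolic there are only finitely many generators, contradicting infinite generation. Hence both simple orbits are elliptic, and by the reduction above irrationally elliptic.

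The crux is (i): ruling out any Reeb orbit $\delta$ (necessarily a cover $\gamma_i^{q}$) whose linearized return map has $1$ as an eigenvalue. Here ECH is not directly available, so I would perturb $\lambda$ to a nearby nondegenerate contact form and study how $\delta$ breaks up. The guiding principle, borrowed from the local Floer theory underlying the Conley conjecture, is that an isolated degenerate orbit carries nontrivial local contact homology that persists under iteration; combined with a Poincar\'e--Birkhoff analysis of the return map transverse to $\gamma_i$, a degenerate iterate forces the birth of additional simple Reeb orbits near $\gamma_i$, or even an entire annulus of them in the integrable case. In every case the contact form would then possess a third simple Reeb orbit, contradicting the hypothesis of exactly two; thus no degenerate iterate exists and $\lambda$ is nondegenerate.

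I expect the degenerate case (i) to be the main obstacle, for two reasons. First, ECH and its convenient ``generators are products of $\gamma_1,\gamma_2$'' structure are unavailable before nondegeneracy is known, so the argument must instead control the bifurcation of each degenerate iterate directly and match the resulting local invariants against the global constraint of having only two simple orbits. Second, the various degeneracy types---parabolic orbits, rationally elliptic orbits with a twisting return map, and the integrable case---must all be excluded, and the semisimple rationally elliptic case in particular requires showing that the transverse twist genuinely produces new orbits rather than being removable. Once this is in place, the ECH volume asymptotics dispatch the hyperbolic case cleanly and the theorem follows.
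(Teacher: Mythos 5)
Your reduction and your part (ii) are sound: granting nondegeneracy, the dichotomy ``hyperbolic or irrationally elliptic'' is correct, and the Weyl-law argument excluding hyperbolic orbits is essentially the argument of \cite{onetwo,twoinf}. The genuine gap is exactly where you locate it, in (i), and the approach you sketch there would fail. Your key claim is that a degenerate iterate of $\gamma_i$ forces, via local contact homology plus a Poincar\'e--Birkhoff analysis, the existence of a third simple Reeb orbit \emph{of the original contact form} $\lambda$ near $\gamma_i$. This is false as a local statement: an isolated degenerate periodic orbit need not have any periodic orbits of any period nearby. For example, take the germ at $0$ of the time-one map of the Hamiltonian flow of $H(x,y)=x^3-3xy^2$ (a ``monkey saddle''): this is an area-preserving disk germ whose derivative at the fixed point is the identity, so the orbit is totally degenerate, yet the nearby level sets of $H$ have no closed components, so the fixed point is isolated among periodic points of \emph{every} iterate. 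Such a germ can be realized as the local return map of a Reeb flow near an embedded closed orbit, so no analysis in a tubular neighborhood of $\gamma_i$ --- local Floer/contact homology, bifurcation theory, or twist arguments --- can manufacture a third orbit of $\lambda$. (Perturbing $\lambda$ can create new orbits, but those are orbits of the \emph{perturbed} form, which is not assumed to have only two; no contradiction with the hypothesis on $\lambda$ results.) Any proof of nondegeneracy must therefore use global input, and this is why the Conley-conjecture-type persistence arguments you invoke always pair local invariants with a global homology computation; before nondegeneracy is known, the global theory you would need is unavailable in your framework --- which is precisely the circularity your outline does not escape.

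The paper never proves nondegeneracy as a separate first step, precisely to avoid this trap; the logical order is the reverse of yours. It extends the ECH index to degenerate contact forms via rotation numbers (\S\ref{sec:defnech}), uses spectral invariants of degenerate forms defined by limits (which still satisfy the Volume Property), and proves the key new estimate (Proposition~\ref{prop:perturb}) that the ECH index of corresponding orbit sets changes by at most $2\left(\sum_i m_i + \sum_j n_j\right)$ under $C^2$-small perturbation. Feeding this into the Volume Property together with the index computation for orbit sets $\gamma_1^{m_1}\gamma_2^{m_2}$ yields (Lemma~\ref{lem:quadratic}) the relations $\phi_i = T_i^2/\op{vol}(Y,\lambda)$ and $\ell(\gamma_1,\gamma_2) = T_1T_2/\op{vol}(Y,\lambda)$ for the possibly degenerate form itself. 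Since $T_1/T_2$ is irrational by \cite{onetwo} and the linking number $\ell(\gamma_1,\gamma_2)$ is rational, the Seifert rotation numbers $\phi_i$ are irrational, hence both orbits are irrationally elliptic --- and nondegeneracy of $\lambda$ is an immediate corollary, not an input. In short, the step you flag as ``the main obstacle'' is the theorem itself, and it is resolved not by local bifurcation analysis but by quantitative global relations that remain valid without any nondegeneracy hypothesis.
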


Theorem~\ref{thm:nondeg} might seem surprising in view of known results about critical points of real-valued functions on finite dimensional manifolds.  For example, on the two-torus, the minimal number of critical points is three, and when there are only three critical points they cannot all be nondegenerate.  We refer the reader to Remark~\ref{rem:pseudo} for related discussion.

As a corollary of Theorem~\ref{thm:nondeg}, we obtain the following topological constraint:

\begin{corollary}
\label{cor:lensspace}
Let $Y$ be a closed three-manifold, and let $\lambda$ be a contact form on $Y$ with exactly two simple Reeb orbits. Then $Y$ is diffeomorphic to a lens space\footnote{Here and below, our convention is that $S^3$ is a lens space, but $S^1 \times S^2$ is not.}.  Moreover, the two simple Reeb orbits are the core circles of a genus one Heegaard splitting of $Y$.
\end{corollary}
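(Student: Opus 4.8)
The plan is to derive the corollary from Theorem~\ref{thm:nondeg} together with the previously established classification of nondegenerate contact three-manifolds with finitely many simple Reeb orbits. First I would apply Theorem~\ref{thm:nondeg} to conclude that $\lambda$ is nondegenerate and that its two simple Reeb orbits $\gamma_1,\gamma_2$ are irrationally elliptic; in particular every Reeb orbit is an iterate of $\gamma_1$ or $\gamma_2$, all iterates are nondegenerate, and no orbit is hyperbolic. This is exactly the hypothesis missing from the earlier nondegenerate results (cf.\ \cite{cdr}): once nondegeneracy comes for free, the two-orbit situation falls under the previously established classification, which identifies $Y$ as a lens space whose genus one Heegaard splitting has core circles $\gamma_1$ and $\gamma_2$. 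In this sense the genuinely new content is supplied entirely by Theorem~\ref{thm:nondeg}, and the corollary is a formal consequence.

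To indicate why the classification holds, I would run the argument through embedded contact homology. Because $\lambda$ is nondegenerate and $\gamma_1,\gamma_2$ are elliptic, hence good, the generators of $\op{ECH}(Y,\lambda)$ are exactly the orbit sets $\gamma_1^m\gamma_2^n$ with $m,n\ge 0$, indexed by the lattice points of the closed first quadrant, with the empty set in lowest grading. An ECH index computation, modeled on the irrational ellipsoid, shows by an index parity argument that the differential vanishes, so that $\op{ECH}(Y,\lambda)$ is free on the $\gamma_1^m\gamma_2^n$. By Taubes's isomorphism this agrees with the Seiberg--Witten Floer cohomology $\widehat{HM}^*(Y)$, whose graded ranks are a diffeomorphism invariant of $Y$. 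Comparing the count of lattice points $(m,n)$ in each grading against these ranks forces $Y$ to have the Floer homology of a lens space; in particular it rules out $S^1\times S^2$ and any $Y$ with $b_1>0$ or with more intricate $\widehat{HM}$, and the period and rotation data of $\gamma_1,\gamma_2$ read off the order of $H_1(Y)$.

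The remaining, and hardest, point is to upgrade this homological constraint to the stated diffeomorphism type together with the Heegaard structure, that is, to identify $\gamma_1$ and $\gamma_2$ as unknotted, standardly linked core circles. For this I would pass from the homology to the holomorphic curves that realize it: the relative intersection theory of ECH records the linking number of $\gamma_1$ and $\gamma_2$, and one expects a low-index curve asymptotic to $\gamma_1$ and $\gamma_2$ whose projection to $Y$ is an annular page of an open book with binding $\gamma_1\cup\gamma_2$. An open book with annular pages is exactly a genus one Heegaard splitting, with the binding components as core circles, so this would simultaneously pin down $Y$ as a lens space and exhibit $\gamma_1,\gamma_2$ as its core circles. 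Controlling these curves precisely enough to read off the knot and linking types is the main obstacle: the purely homological steps are comparatively routine, whereas extracting the genuine topology of the two orbits is where the analysis must be done carefully. If one instead simply cites the previous classification, this difficulty is precisely what that result already resolves, and the corollary reduces to verifying its hypotheses via Theorem~\ref{thm:nondeg}.
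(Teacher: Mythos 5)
Your proposal follows essentially the same route as the paper: apply Theorem~\ref{thm:nondeg} to get nondegeneracy for free, then invoke the previously established classification of nondegenerate contact forms with exactly two simple Reeb orbits, which identifies $Y$ as a lens space with the two orbits as core circles of a genus one Heegaard splitting. One correction: that classification is \cite[Thm.~1.3 and \S 4.8]{wh}, not \cite{cdr} (which proves the two-or-infinitely-many dichotomy); your additional ECH sketch of how the classification is proved is supplementary, since the paper simply cites \cite{wh} at this point.
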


\begin{proof}
This was shown in \cite[Thm.\ 1.3 and \S4.8]{wh} under the additional hypothesis that $\lambda$ is nondegenerate. By Theorem~\ref{thm:nondeg}, this nondegeneracy holds automatically.
\end{proof}

\begin{remark}
A special case of Theorem~\ref{thm:nondeg}, where $Y$ is a compact convex hypersurface in $\R^4$ with the restriction of the standard Liouville form \eqref{eqn:Liouville}, was previously shown in \cite[Thm.\ 1.4]{WHL}.
\end{remark}

We also obtain additional dynamical information. To state the result, recall that the {\em contact volume\/} of $(Y,\lambda)$ is defined by
\[
\op{vol}(Y,\lambda) := \int_Y \lambda \wedge d \lambda.
\] 

\begin{theorem}
\label{thm:dynamics}
Let $Y$ be a lens space and let $\lambda$ be a contact form on $Y$ with exactly two simple Reeb orbits, $\gamma_1$ and $\gamma_2$. 
Then:
\begin{description}
\item{(a)}  Let $p = | \pi_1(Y)| < \infty$, let $T_i\in\R$ denote the period of $\gamma_i$, and let $\phi_i\in\R$ denote the ``Seifert rotation number'' of $\gamma_i$, see Definition~\ref{def:srn}. Then
\[
\op{vol}(Y,\lambda) = pT_1T_2 = T^2_1/\phi_1 = T^2_2/\phi_2.
\]
\item{(b)} $\lambda$ is dynamically convex, and the contact structure $\xi=\Ker(\lambda)$ is universally tight\footnote{Recall that a contact form on a three-manifold $Y$ with $c_1(\xi)|_{\pi_2(Y)} = 0$ is called {\em dynamically convex} if CZ$(\gamma) \ge 3$ for every contractible Reeb orbit $\gamma$, where CZ denotes the Conley-Zehnder index (see \S\ref{sec:defnech}) computed with respect to a trivialization which extends over a disc bounded by $\gamma$. A contact structure on $Y$ is {\em universally tight\/} if its pullback to the universal cover of $Y$ is tight.}.
\end{description}
\end{theorem}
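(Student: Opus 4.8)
The plan is to reduce to the case $Y=S^3$ via the universal cover, compute the volume with embedded contact homology, and extract the rotation numbers from the global surface of section. Since $Y$ is a lens space with $|\pi_1(Y)|=p$, its universal cover is $S^3$; write $\pi\colon S^3\to Y$ for the $p$-fold cover and $\tilde\lambda=\pi^*\lambda$. Each $\gamma_i$ generates $\pi_1(Y)$, so its preimage is a single embedded orbit $\tilde\gamma_i$ covering it $p$ times, of period $\tilde T_i=pT_i$; moreover $\op{vol}(S^3,\tilde\lambda)=p\,\op{vol}(Y,\lambda)$ and the Seifert rotation numbers scale as $\tilde\phi_i=p\phi_i$ (Definition~\ref{def:srn}). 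A direct substitution shows the three identities on $Y$ are equivalent to $\op{vol}(S^3,\tilde\lambda)=\tilde T_1\tilde T_2=\tilde T_1^2/\tilde\phi_1=\tilde T_2^2/\tilde\phi_2$, so I may assume $Y=S^3$ and drop tildes. By Theorem~\ref{thm:nondeg} the orbits $\gamma_1,\gamma_2$ are irrationally elliptic; they are the cores of a genus one Heegaard splitting (hence a Hopf link), and the Reeb flow has a disk-like global surface of section $D_1$ with $\partial D_1=\gamma_1$, area $\int_{D_1}d\lambda=T_1$, and first return map a smooth area-preserving pseudorotation whose unique fixed point is $\gamma_2\cap D_1$.

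To compute the volume I would use the asymptotics of the ECH spectrum, and this is the step I expect to be the crux. Nondegeneracy (Theorem~\ref{thm:nondeg}) guarantees that ECH is defined with its usual generators, and since $\gamma_1,\gamma_2$ are the only simple orbits, every generator is an orbit set $\gamma_1^m\gamma_2^n$ of action $mT_1+nT_2$. Both orbits being elliptic, every generator has even ECH index, so the differential vanishes; as $ECH_*(S^3,\xi_{\mathrm{std}})\cong\Z$ in each even degree and vanishes in odd degrees, the ECH index must define a bijection $\Z_{\ge0}^2\to 2\Z_{\ge0}$. Together with monotonicity of the ECH spectral invariants this forces the $k$-th spectral number $c_k$ to be the $k$-th smallest element of $\{mT_1+nT_2\}$. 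Since $\#\{(m,n):mT_1+nT_2\le L\}\sim L^2/(2T_1T_2)$, one gets $c_k^2/k\to 2T_1T_2$, and comparing with the volume property $c_k^2/k\to 2\,\op{vol}(S^3,\lambda)$ yields $\op{vol}(S^3,\lambda)=T_1T_2$.

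For the rotation numbers I would read $\phi_i$ off the same data. The leading quadratic asymptotics of the ECH index of $\gamma_1^m$ equal $\phi_1 m^2+O(m)$ once the disk-framing Conley–Zehnder indices and the self-linking $-1$ of the binding are combined; matching this against the ordering asymptotics $I(\gamma_1^m)\sim (T_1/T_2)m^2$ coming from the bijection above gives $\phi_1=T_1/T_2$, and symmetrically $\phi_2=T_2/T_1$ using the analogous surface of section $D_2$ bounded by $\gamma_2$. Hence $T_1^2/\phi_1=T_1T_2=T_2^2/\phi_2=\op{vol}$, proving part (a). The one delicate point here is the bookkeeping relating the Seifert framing of Definition~\ref{def:srn} to the disk framing in which $\op{CZ}$ is computed; on $S^3$ these differ by a single twist, so that the disk-framing rotation number of $\gamma_i$ is $\phi_i+1$.

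For part (b), a contractible Reeb orbit of $Y$ is an iterate $\gamma_i^{pm}$, which lifts to $\tilde\gamma_i^m$ with the same disk-framing Conley–Zehnder index, so it suffices to prove dynamical convexity on $S^3$. There the disk-framing rotation number of $\gamma_i$ is $\phi_i+1$ with $\phi_i=T_i/T_{3-i}>0$, so $\op{CZ}(\gamma_i^m)=2\lfloor m(\phi_i+1)\rfloor+1=2\bigl(m+\lfloor m\phi_i\rfloor\bigr)+1\ge 2m+1\ge 3$ for every $m\ge1$; thus $\tilde\lambda$, and hence $\lambda$, is dynamically convex. Finally, the disk-like global surface of section exhibits $\tilde\xi=\pi^*\xi$ as supported by an open book with disk pages, which is the standard tight contact structure on $S^3$; therefore $\xi$ is universally tight.
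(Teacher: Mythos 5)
Your part (a), in its core, is a legitimate alternative to the paper's argument. The paper proves the identities $\phi_i=T_i^2/\op{vol}(Y,\lambda)$ and $\ell(\gamma_1,\gamma_2)=T_1T_2/\op{vol}(Y,\lambda)$ directly on $Y$ (Lemma~\ref{lem:quadratic}), for possibly degenerate forms, using the perturbation estimate of Proposition~\ref{prop:perturb}; part (a) then follows from $\ell(\gamma_1,\gamma_2)=1/p$. You instead pass to the universal cover and exploit the fact that, after Theorem~\ref{thm:nondeg}, the form is nondegenerate with both orbits elliptic: the ECH differential vanishes, the index gives an injection of the generators $\gamma_1^m\gamma_2^n$ onto the (possibly shifted) $\Z/2$-tower of $ECH(S^3)$, and strict monotonicity of spectral invariants (Proposition~\ref{prop:spectral}(b)) plus lattice-point counting and the Volume Property give $\op{vol}=T_1T_2$. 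That works, with minor loose ends: the scaling $\tilde\phi_i=p\phi_i$ needs a proof (true, via quadraticity of $Q_\tau$ under the $p$-fold cover); the tower structure holds for \emph{any} contact structure on $S^3$ (unique spin-c structure), up to a grading shift that is harmless for asymptotics; and $c_k$ is the $(k+O(1))$-th smallest action, not exactly the $k$-th, again harmless. Importantly, the asymptotics $I(\gamma_1^m)=\phi_1m^2+O(m)$ follow directly from Definition~\ref{def:srn} (this is the paper's Lemma~\ref{lem:cei}); you should route the rotation-number extraction through that, \emph{not} through $\op{sl}(\gamma_i)=-1$, because of the problem below.

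The genuine gap is in part (b). You assert at the outset that $\gamma_1$ bounds a disk-like global surface of section and that the self-linking numbers are $-1$, with no justification beyond the Heegaard splitting; these are deep facts, not topological consequences of Corollary~\ref{cor:lensspace}. In the paper they come from \cite{HS}*{Thm.\ 1.3} combined with \cite{unknotted}*{Thm.\ 1.4}, and those results require knowing \emph{first} that the contact structure on $S^3$ is tight. The paper establishes tightness before any surface of section: if $\xi$ were overtwisted, \cite{unknotted}*{Thm.\ 1.4} would produce a hyperbolic Reeb orbit, contradicting the fact (Theorem~\ref{thm:nondeg}) that both orbits and all their iterates are irrationally elliptic. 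Your proposal instead derives tightness at the very end \emph{from} the surface of section, so as written the argument is circular: the existence theorems for disk-like global surfaces of section presuppose tightness, and your tightness presupposes the surface of section. (Note also that in the paper, Corollary~\ref{cor:disc} — both orbits bound disk-like global surfaces of section — is deduced \emph{from} Theorem~\ref{thm:dynamics}, not used to prove it.) To repair part (b) you must follow the paper's ordering: tightness via \cite{unknotted}; then \cite{HS} gives $\op{sl}(\gamma_1)=-1$, $\op{CZ}(\gamma_1)=3$ and the open book; then Brouwer's translation theorem identifies $\gamma_2$ with the fixed point of the return map and yields $\op{sl}(\gamma_2)=-1$; after that, your computation $\op{CZ}(\gamma_i^m)=2\left(m+\floor{m\phi_i}\right)+1\ge 3$, using $\phi_i>0$ and $\phi_1\phi_2=1$, does finish dynamical convexity, and is in fact a slightly cleaner ending than the paper's.
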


\begin{example}
For the ellipsoid in Example~\ref{ex:ellipsoid}, we have $T_1=a$, $T_2=b$, $\phi_1=a/b$, $\phi_2=b/a$, $p=1$, and $\op{vol}=ab$. Thus
Theorem~\ref{thm:dynamics}(a) implies that if $Y=S^3$, then the periods $T_i$, the rotation numbers $\phi_i$, and the contact volume satisfy the same relations as for an ellipsoid. For $Y=S^3$, under the additional assumptions that $\lambda$ is nondegenerate and $\xi$ is the standard contact structure, it was previously shown in \cite{BCE,gurel}, that ``action-index relations'' hold, implying that the periods $T_i$ and rotation numbers $\phi_i$ satisfy the same relations as for an ellipsoid. The equation $\op{vol}=T_1T_2$ that we prove in this case  answers~\cite[Question~2]{action_linking}.  
\end{example}

\begin{remark}
There exist contact forms on $S^3$ with exactly two simple Reeb orbits which are not strictly contactomorphic to ellipsoids. One way to see this is to start from Katok's construction~\cite{katok} of Finsler metrics on $S^2$ with exactly two closed geodesics, such that the Liouville measure on the unit tangent bundle is ergodic for the geodesic flow. Such a geodesic flow can then be lifted to a Reeb flow on the standard contact $3$-sphere with the same properties. Another way to see this is by Albers-Geiges-Zehmisch \cite{agz}, who showed that the pseudorotations from \cite{fayad_katok} can be realized as the return map on a disk-like global surface of section for a Reeb flow on the standard contact $3$-sphere with precisely two periodic orbits; see~\S\ref{sec:pseudo} below. On the other hand, Helmut Hofer has suggested to the authors in private correspondence that perhaps imposing the additional condition that the rotation numbers of the two Reeb orbits are Diophantine forces the contact form to be strictly contactomorphic to an ellipsoid; cf \cite[Question 6]{hoferconjecture}. 
\end{remark}

\begin{remark}
As shown in \cite[Prop. 5.1]{honda} (see \cite[p. 17]{corn} for more explanation), each lens space has either one or two universally tight contact structures up to isotopy, and when there are two they are contactomorphic (and one is obtained from the other by reversing its orientation). Consequently, in Theorem~\ref{thm:dynamics}(b), the contact structure is contactomorphic to a ``standard'' contact structure on the lens space obtained as in Example~\ref{ex:ellipsoid}. In particular, universally tight contact structures on lens spaces are precisely the ones that admit contact forms with exactly two simple Reeb orbits. Some other results obtaining information about contact structures from Reeb dynamics can be found in \cite{hwzduke, hwzcpam, HLS, etnyreghrist}.
\end{remark}

\begin{remark}
\label{rmk:knottype}
We also obtain information about the knot types of the simple Reeb orbits $\gamma_1$ and $\gamma_2$. It follows from the Heegaard splitting in Corollary~\ref{cor:lensspace} that these are $p$-unknotted. We further show in \S\ref{sec:fc} that their self-linking number is $-1$ when $p=1$;  similar arguments show that for general $p$, their rational self-linking number, as defined in \cite{be}, equals $-1/p$.
\end{remark}

\subsection{Outline of the proofs}

We now briefly describe the proofs of Theorems~\ref{thm:nondeg} and \ref{thm:dynamics}.

A key ingredient in these proofs, as well as in the related papers \cite{onetwo,twoinf}, is the ``Volume Property'' in embedded contact homology, which was proved in \cite{asymp}. The embedded contact homology (ECH) of $(Y,\lambda)$ is the homology of a chain complex which is built out of Reeb orbits, and whose differential counts (mostly) embedded pseudoholomorphic curves in $\R\times Y$; see the lecture notes \cite{bn} and the review in \S\ref{sec:preliminaries}. The version of the Volume Property that we will use here asserts that if $Y$ is a closed connected $3$-manifold with a contact form $\lambda$, then
\[
\lim_{k\to\infty} \frac{c_{\sigma_k}(Y,\lambda)^2}{k} = 2\op{vol}(Y,\lambda).
\]
Here $\{\sigma_k\}$ is a ``$U$-sequence'' in ECH, and $c_{\sigma_k}$ is a ``spectral invariant'' associated to $\sigma_k$, which is the total symplectic action of a certain finite set of Reeb orbits determined by $\sigma_k$; these notions are reviewed in \S\ref{sec:preliminaries}. 

The outline of the proof of Theorem~\ref{thm:nondeg} is as follows. Let $\gamma_1$ and $\gamma_2$ denote the two simple Reeb orbits, and let $T_1$ and $T_2$ denote their periods. Simple applications of the Volume Property from \cite{onetwo,twoinf} (just using the $k^{1/2}$ growth rate of the spectral invariants and not the exact relation with contact volume) show that the homology classes $[\gamma_i]\in H_1(Y)$ are torsion, and the ratio $T_1/T_2$ is irrational. A more precise use of the Volume Property then gives the relations
\begin{equation}
\label{eqn:cv}
\phi_i = \frac{T_i^2}{\op{vol}(Y,\lambda)} \quad\quad\quad
\ell(\gamma_1,\gamma_2) = \frac{T_1T_2}{\op{vol}(Y,\lambda)}
\end{equation}
where $\phi_i \in \R$ is the Seifert rotation number that appears in Theorem~\ref{thm:dynamics}(a), while $\ell(\gamma_1,\gamma_2)\in\Q$ is the linking number of $\gamma_1$ and $\gamma_2$, see Definition~\ref{def:rln}. The proof of \eqref{eqn:cv} also depends on a new estimate for the behavior of the ECH index (the grading on the ECH chain complex) under perturbations of possibly degenerate contact forms, which is proved in \S\ref{sec:estimate}.

The equations \eqref{eqn:cv} imply the relations
\begin{equation}
\label{eqn:actionindex}
\phi_1 = \ell(\gamma_1,\gamma_2)\frac{T_1}{T_2}, \quad\quad\quad \phi_2 = \ell(\gamma_1,\gamma_2)\frac{T_2}{T_1}.
\end{equation}
Since $\ell(\gamma_1,\gamma_2)$ is rational and $T_1/T_2$ is irrational, it follows that $\phi_1$ and $\phi_2$ are irrational. The latter fact implies that $\gamma_1$ and $\gamma_2$ are irrationally elliptic; see \S\ref{sec:cpn}. This completes the proof of Theorem~\ref{thm:nondeg}.

The Heegaard decomposition in Corollary~\ref{cor:lensspace} implies that $\ell(\gamma_1,\gamma_2)=1/p$, and combined with \eqref{eqn:actionindex} this proves Theorem~\ref{thm:dynamics}(a). The proof of Theorem~\ref{thm:dynamics}(b) uses additional calculations in \S\ref{sec:fc} to deduce dynamical convexity and universal tightness from information about the numbers $\phi_i$.

\subsection{Pseudorotations}
\label{sec:pseudo}

The contact forms studied here are analogous to ``pseudorotations'', defined in various ways as maps in some class with the minimum number of periodic orbits. For example, in \cite{gg} a {\em Hamiltonian pseudorotation\/} of $\C P^n$ is defined to be a Hamiltonian symplectomorphism of $\C P^n$ with $n+1$ fixed points and no other periodic points (see e.g.\ \cite{cgg,ls,shelukhin} for generalizations to other symplectic manifolds).  More classically, we consider here pseudorotations of the open or closed disk defined as area-preserving homeomorphisms with one fixed point and no other periodic points;  see e.g.\ \cite{barneyannals,fayad_katok}. 

In fact, there is a direct connection between the contact forms considered in this paper and pseudorotations of the closed disk.  Recall that given a closed three-manifold $Y$ with a contact form $\lambda$, a {\em disk-like global surface of section} for the Reeb flow is an immersed disk, with boundary on a Reeb orbit, embedded and transverse to the Reeb flow in the interior, such that the Reeb flow starting at any point in $Y$ hits the disk both forwards and backwards in time.

\begin{corollary}
\label{cor:disc}
Let $Y$ be a closed three-manifold, and let $\lambda$ be a contact form on $Y$ with exactly two simple Reeb orbits. Then both orbits bound disk-like global surfaces of section whose associated return maps define smooth pseudorotations of the open disk. 
\end{corollary}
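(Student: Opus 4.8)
The plan is to reduce to the tight three-sphere, where disk-like global surfaces of section are well understood, and then to descend to the lens space $Y$. By Corollary~\ref{cor:lensspace}, $Y$ is a lens space, so its universal cover is $S^3$, with deck group $G\cong\Z/p\Z$ acting freely; since $\xi=\Ker(\lambda)$ is universally tight by Theorem~\ref{thm:dynamics}(b), the pullback $\tilde\lambda$ is a contact form defining the tight contact structure on $S^3$, and $G$ acts by strict contactomorphisms preserving the Reeb flow. Each $\gamma_i$ is a core circle of a genus one Heegaard splitting of $Y$ (Corollary~\ref{cor:lensspace}) and hence generates $\pi_1(Y)=\Z/p\Z$, so its preimage is a single embedded circle $\tilde\gamma_i$ covering $\gamma_i$ with degree $p$. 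As any simple Reeb orbit of $\tilde\lambda$ projects onto $\gamma_1$ or $\gamma_2$, the orbits $\tilde\gamma_1,\tilde\gamma_2$ are the only simple Reeb orbits of $\tilde\lambda$. Thus $(S^3,\tilde\lambda)$ again satisfies the hypotheses of this paper, so Theorems~\ref{thm:nondeg} and~\ref{thm:dynamics} and Remark~\ref{rmk:knottype} apply to it: $\tilde\lambda$ is dynamically convex, and each $\tilde\gamma_i$ is irrationally elliptic, unknotted, and has self-linking number $-1$ (the case $p=1$ of Remark~\ref{rmk:knottype}).

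On the tight $S^3$ I would now invoke the existence theory for disk-like global surfaces of section of Hofer--Wysocki--Zehnder and Hryniewicz. The data just collected---$\tilde\lambda$ dynamically convex, and $\tilde\gamma_i$ unknotted with self-linking number $-1$---are exactly the hypotheses of Hryniewicz's criterion, which then produces for each $i$ a disk-like global surface of section $\tilde D_i$ with $\partial\tilde D_i=\tilde\gamma_i$; irrational ellipticity guarantees that the transverse rotation (equivalently $\op{CZ}\ge 3$) hypotheses hold for $\tilde\gamma_i$ and all of its iterates, so the criterion applies to both orbits simultaneously. Concretely this can be packaged as a finite-energy foliation realizing an open book of $S^3$ with binding $\tilde\gamma_i$ and disk-like pages that are global surfaces of section; the unique interior fixed point of the page return map is the intersection with the other orbit $\tilde\gamma_j$, so the return map is a pseudorotation, consistent with \S\ref{sec:pseudo}.

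Finally I would descend to $Y$. Choosing a $G$-invariant almost complex structure on the symplectization, the construction can be carried out $G$-equivariantly, yielding a $G$-invariant open book whose pages $\{\tilde D_i^\theta\}$ foliate $S^3\setminus\tilde\gamma_i$ and are permuted by $G$. Distinct pages are disjoint, so for $g\ne e$ the translate $g\cdot\tilde D_i^\theta$ meets $\tilde D_i^\theta$ only along $\tilde\gamma_i$; hence the projection of a single page to $Y$ is injective on its interior. This projection is an immersed disk, embedded and transverse to the Reeb flow in its interior, whose boundary is the degree $p$ covering $\tilde\gamma_i\to\gamma_i$, and every flow line still meets it forwards and backwards in time---that is, a disk-like global surface of section for $\gamma_i$. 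The main obstacle is this last step together with the verification in the middle one: one must confirm that the precise index and linking hypotheses of the cited criterion are met by both orbits, and arrange the holomorphic-curve construction $G$-equivariantly so that a page descends to an embedded disk in $Y$. An alternative that avoids the descent would be to invoke a global-surface-of-section existence theorem stated directly for dynamically convex Reeb flows on lens spaces.
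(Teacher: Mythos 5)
Your reduction to the universal cover is sound, and in fact it retraces what the paper itself does in proving Theorem~\ref{thm:dynamics}(b): there one lifts to $S^3$, observes that $\tilde\lambda$ has exactly the two simple orbits $\tilde\gamma_1,\tilde\gamma_2$, and uses \cite[Thm.\ 1.3]{HS} together with \cite[Thm.\ 1.4]{unknotted} to obtain tightness, dynamical convexity, $\op{sl}(\tilde\gamma_i)=-1$, and even an open book on $S^3$ with disk-like pages. The genuine gap is in your descent step. The $S^3$ existence theorems you invoke produce, for a given compatible $J$, \emph{some} disk-like global surface of section (or open book with disk-like pages); they do not produce a deck-group-invariant one. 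Without invariance, the projection of a page to $Y$ need not be injective on its interior, so the image need not be embedded there, which is precisely what the definition of a global surface of section requires. Repairing this needs three ingredients not contained in the results you cite: a $G$-invariant compatible $J$ (this part is easy, by averaging a metric); a uniqueness statement for the leaves (e.g.\ uniqueness of fast finite-energy planes asymptotic to $\tilde\gamma_i$, or automatic-transversality arguments) in order to conclude that the resulting family of pages is permuted into itself by $G$; and an argument (Brouwer fixed point, plus freeness of the deck action on $\tilde\gamma_i$) that no nontrivial deck transformation fixes a page setwise, since your disjointness claim for $g\cdot\tilde D_i^\theta$ and $\tilde D_i^\theta$ presupposes $g\cdot\tilde D_i^\theta\neq\tilde D_i^\theta$. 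You flag this equivariant construction yourself as ``the main obstacle,'' and as written it is an unproven step, so the proof is incomplete.

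Note, however, that your closing sentence is essentially the paper's entire proof: by Theorem~\ref{thm:nondeg}, Corollary~\ref{cor:lensspace}, and Theorem~\ref{thm:dynamics}, the form $\lambda$ is nondegenerate and dynamically convex on the lens space $Y$; by Remark~\ref{rmk:knottype} each $\gamma_i$ is $p$-unknotted with rational self-linking number $-1/p$; and \cite[Thm.\ 1.12]{HLS} is exactly a global-surface-of-section existence theorem stated directly for dynamically convex Reeb flows on lens spaces under these hypotheses. Since you have already assembled every one of those hypotheses, replacing the equivariant construction by that single citation turns your sketch into a complete proof, and it is the paper's. One minor correction along the way: irrational ellipticity by itself does not give $\op{CZ}\ge 3$ for all iterates; that is what dynamical convexity (Theorem~\ref{thm:dynamics}(b)) asserts, so you should quote the latter, not ellipticity, when verifying the index hypotheses.
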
 

\begin{proof}
By Theorem~\ref{thm:nondeg}, Corollary~\ref{cor:lensspace} and Theorem~\ref{thm:dynamics}, $Y$ is a lens space and $\lambda$ is nondegenerate and dynamically convex.  As explained in Remark~\ref{rmk:knottype}, both orbits are $p$-unknotted, with self-linking number $-1/p$.  Hence, the result follows from \cite[Thm. 1.12]{HLS}.      
\end{proof}

\begin{remark}
Conversely, as mentioned above, at least some pseudorotations of the closed disk can be ``suspended" to contact forms on $S^3$ with exactly two simple Reeb orbits  \cite{agz}. 
\end{remark}

\begin{remark}
\label{rem:pseudo}
It is shown in \cite{kerman}, see also \cite{franks}, that for a Hamiltonian pseudorotation of $\C P^1$, each fixed point is strongly nondegenerate, meaning that the linearized return map and its higher powers are nondegenerate, and moreover the fixed points are irrationally elliptic, similarly to Theorem~\ref{thm:nondeg}.  It is an open question whether every pseudorotation of $\C P^n$ for $n > 1$ is strongly nondegenerate, and one can ask analogous questions for pseudorotations of more general symplectic manifolds. 
\end{remark}

\begin{remark}
For pseudorotations of $D^2$ in many cases, identities related to Theorem~\ref{thm:dynamics}(a) were proved in \cite{abror,joli}.
\end{remark}

\begin{remark}
One can arrange in the statement of Corollary~\ref{cor:disc} that the first return maps on the obtained disk-like global surfaces of section extend smoothly to the boundary and preserve a smooth $2$-form that defines an area-form in the interior. Moreover, one can conjugate such a return map by a homeomorphism to obtain a pseudorotation of the closed disk which is smooth in the interior. 
\end{remark}

\paragraph{Acknowledgments.} Key discussions for this project took place when DCG and HL were visiting Peking University for the ``Workshop on gauge theory and Floer homology'' in December 2019. We thank Peking University for their hospitality during this visit. This research was completed while DCG was on a von Neumann fellowship at the Insititute for Advanced Study. DCG thanks the Institute for their wonderful support. We thank Viktor Ginzburg and Helmut Hofer for helpful comments. We thank the referee for careful reading of the paper.


\section{Preliminaries}
\label{sec:preliminaries}

In this section we review the material about embedded contact homology that is needed for the proofs of Theorems~\ref{thm:nondeg} and \ref{thm:dynamics}. We include a new, slight extension of the definition of the ECH index to degenerate contact forms.

Throughout this section fix a closed oriented three-manifold $Y$ and a contact form $\lambda$ on $Y$, and let $\xi=\Ker(\lambda)$ denote the associated contact structure.

\subsection{Topological preliminaries}
\label{sec:topological}

We now recall some topological notions we will need, following the treatment in \cite{ir}. These were originally introduced in a slightly different context in \cite{pfh2}.

\begin{definition}
An {\em orbit set\/} is a finite set of pairs $\alpha=\{(\alpha_i,m_i)\}$ where the $\alpha_i$ are distinct simple Reeb orbits, and the $m_i$ are positive integers. We define the homology class of the orbit set $\alpha$ by
\[
[\alpha] = \sum_i m_i[\alpha_i]\in H_1(Y).
\]
\end{definition}

\begin{definition}
If $\alpha=\{(\alpha_i,m_i)\}$ and $\beta=\{(\beta_j,n_j)\}$ are orbit sets with $[\alpha]=[\beta]$, define $H_2(Y,\alpha,\beta)$ to be the set of $2$-chains $Z$ in $Y$ with $\partial Z = \sum_i m_i\alpha_i - \sum_j n_j\beta_j$, modulo boundaries of $3$-chains. The set $H_2(Y,\alpha,\beta)$ is an affine space over $H_2(Y)$. 
\end{definition}

Given orbit sets $\alpha$ and $\beta$ as above, let $Z\in H_2(Y,\alpha,\beta)$, and let $\tau$ be a homotopy class of symplectic trivialization of the contact structure $\xi$ over the Reeb orbits $\alpha_i$ and $\beta_j$.

\begin{definition}
(cf.\ \cite[\S2.5]{ir})
Define the {\em relative first Chern class\/}
\[
c_\tau(\alpha,\beta,Z) \in \Z
\]
as follows. Let $S$ be a compact oriented surface with boundary and let $f:S\to Y$ be a smooth map representing the class $Z$. Let $\psi$ be a section of $f^*\xi$ which on each boundary component is nonvanishing and constant with respect to $\tau$. Define $c_\tau(\alpha,\beta,Z)$ to be the algebraic count of zeroes of $\psi$.
\end{definition}

\begin{definition}
\label{def:admrep}
\cite[\S2.7]{ir}
An {\em admissible representative\/} of $Z\in H_2(Y,\alpha,\beta)$ is a smooth map $f:S\to[-1,1]\times Y$ where $S$ is a compact oriented surface with boundary; the restriction of $f$ to $\partial S$ consists of positively oriented covers of $\{1\}\times\alpha_i$ with total multiplicity $m_i$ and negatively oriented covers of $\{-1\}\times\beta_j$ with total multiplicity $n_j$; the composition of $f$ with the projection $[-1,1]\times Y\to Y$ represents the class $Z$; the restriction of $f$ to the interior of $S$ is an embedding; and $f$ is transverse to $\{-1,1\}\times Y$.
\end{definition}

\begin{definition}
\cite[\S2.7]{ir}
If $Z,Z'\in H_2(Y,\alpha,\beta)$, define the {\em relative intersection pairing\/}
\[
Q_\tau(Z,Z')\in\Z
\]
as follows. Let $S,S'$ be admissible representatives of $Z$ and $Z'$ respectively whose interiors are transverse and do not intersect near the boundary. Define
\begin{equation}
\label{eqn:Q}
Q_\tau(Z,Z') = \#(\op{int}(S)\cap\op{int}(S')) - \sum_i\ell_\tau(\zeta_i^+,{\zeta_i^+}') + \sum_j\ell_\tau(\zeta_j^-,{\zeta_j^-}').
\end{equation}
Here `$\#$' denotes the signed count of intersections, while the remaining terms are linking numbers defined as follows. For $\epsilon>0$ small, the intersection of $S$ with $\{1-\epsilon\}\times Y$ consists of the union over $i$ of a braid $\zeta_i^+$ in a neighborhood of $\alpha_i$ (see \S\ref{sec:rls}), while the intersection of $S$ with $\{-1+\epsilon\}\times Y$ consists of the union over $j$ of a braid $\zeta_j^-$ in a neighborhood of $\beta_j$. Likewise, $S'$ determines braids ${\zeta_i^+}'$ and ${\zeta_j^-}'$. The notation $\ell_\tau$ indicates the linking number in a neighborhood of $\alpha_i$ or $\beta_j$ computed using the trivialization $\tau$; see \cite[\S2.6]{ir} for details and sign conventions.

When $Z=Z'$, we write\footnote{An alternate, equivalent definition of $Q_\tau(\alpha,\beta,Z)$ is given in \cite[\S3.3]{bn}, which does not include the linking number terms in \eqref{eqn:Q}. There the admissible representatives $S$ and $S'$ are required to satisfy additional conditions which force these linking number terms to be zero.}
\[
Q_\tau(\alpha,\beta,Z) = Q_\tau(Z,Z).
\]
\end{definition}

As explained in \cite{ir}, the relative first Chern class $c_\tau(\alpha,\beta,Z)$ and the relative self-intersection number $Q_\tau(\alpha,\beta,Z)$ depend only on $\alpha$, $\beta$, $Z$, and $\tau$. Moreover, if we change $Z$ by adding $A\in H_2(Y)$ then they behave as follows:
\begin{align}
\label{eqn:camb}
c_\tau(\alpha,\beta,Z+A) - c_\tau(\alpha,\beta,Z) &= \langle c_1(\xi),A\rangle,\\
\label{eqn:Qamb}
Q_\tau(\alpha,\beta,Z+A) - Q_\tau(\alpha,\beta,Z) &= 2[\alpha]\cdot A.
\end{align}

\begin{remark}
\label{rem:additive}
If $\gamma$ is a third orbit set, if $\tau$ is a trivialization of $\xi$ over the Reeb orbits in $\alpha$, $\beta$, and $\gamma$, and if $W\in H_2(Y,\beta,\gamma)$, then we have the additivity properties
\[
\begin{split}
c_\tau(\alpha,\beta,Z) + c_\tau(\beta,\gamma,W) &= c_\tau(\alpha,\gamma,Z+W),\\
Q_\tau(\alpha,\beta,Z) + Q_\tau(\beta,\gamma,W) &= Q_\tau(\alpha,\gamma,Z+W).
\end{split}
\]
Note also that the definition of $c_\tau$ makes sense more generally if the $\alpha_i$ and $\beta_j$ are transverse knots. Likewise the definition of $Q_\tau$ makes sense if the $\alpha_i$ and $\beta_j$ are knots and $\tau$ is an oriented trivialization of their normal bundles. 
\end{remark}

\subsection{The ECH index}
\label{sec:defnech}

Let $\gamma:\R/T\Z\to Y$ be a Reeb orbit and let $\tau$ be a symplectic trivialization of $\gamma^*\xi$.
The derivative of the time $t$ Reeb flow from $\xi_{\gamma(0)}$ to $\xi_{\gamma(t)}$, with respect to $\tau$, is a $2\times 2$ symplectic matrix $\Phi(t)$. The family of symplectic matrices $\{\Phi(t)\}_{t\in[0,T]}$ induces a family of diffeomorphisms of $S^1$ in the universal cover of $\op{Diff}(S^1)$, which has a dynamical rotation number, which we denote by $\theta_\tau(\gamma)\in\R$. We call this real number the {\em rotation number\/} of $\gamma$ with respect to $\tau$ and denote it by $\theta_\tau(\gamma)\in\R$; it depends only on $\gamma$ and the homotopy class of $\tau$. When $\theta_\tau(\gamma)\notin\frac{1}{2}\Z$, the eigenvalues of the linearized return map \eqref{eqn:Pgamma} are $e^{\pm 2\pi i\theta_\tau(\gamma)}$.

\begin{definition}
Define the {\bf Conley-Zehnder index\/}
\begin{equation}
\label{eqn:CZdef}
\op{CZ}_\tau(\gamma) = \floor{\theta_\tau(\gamma)} + \ceil{\theta_\tau(\gamma)}\in\Z.
\end{equation}
\end{definition}

\begin{remark}
The above definition agrees with the usual Conley-Zehnder index when $\gamma$ is nondegenerate. When $\gamma$ is degenerate, it is common to give a different definition of the Conley-Zehnder index, as the minimum of the Conley-Zehnder indices of nondegenerate perturbations of $\gamma$, and this will sometimes differ from our definition by $1$. For the purposes of this paper, especially to obtain an estimate as in Proposition~\ref{prop:perturb} below (possibly with a different constant), it does not matter which of these definitions of the Conley-Zehnder index we use for degenerate Reeb orbits.
\end{remark}

\begin{notation}
If $\alpha=\{(\alpha_i,m_i)\}$ is an orbit set and if $\tau$ is a trivialization of $\xi$ over all of the Reeb orbits $\alpha_i$, define
\begin{equation}
\label{eqn:CZI}
\op{CZ}_\tau^I(\alpha) = \sum_i\sum_{k=1}^{m_i}\op{CZ}_\tau(\alpha_i^k).
\end{equation}
Here $\gamma^k$ denotes the $k^{th}$ iterate of $\gamma$.
\end{notation}

\begin{definition}
Let $\alpha$ and $\beta$ be orbit sets with $[\alpha]=[\beta]\in H_1(Y)$, and let $Z\in H_2(Y,\alpha,\beta)$. Define the {\em ECH index\/}
\begin{equation}
\label{eqn:I}
I(\alpha,\beta,Z) = c_\tau(\alpha,\beta,Z) + Q_\tau(\alpha,\beta,Z) + \op{CZ}_\tau^I(\alpha) - \op{CZ}_\tau^I(\beta) \in \Z.
\end{equation}
\end{definition}

The above agrees with the usual definition of the ECH index, see e.g.\ \cite[\S3.4]{bn}, when the contact form is nondegenerate. It is explained for example in \cite[\S2.8]{ir} why $I(\alpha,\beta,Z)$ depends only on $\alpha$, $\beta$, and $Z$, and not on $\tau$. Moreover, it follows from \eqref{eqn:camb} and \eqref{eqn:Qamb} that if we change $Z$ by adding $A\in H_2(Y)$, then
\begin{equation}
\label{eqn:indexambig}
I(\alpha,\beta,Z+A) - I(\alpha,\beta,Z) = \langle c_1(\xi)+2\op{PD}(\Gamma),A\rangle,
\end{equation}
where $\Gamma=[\alpha]=[\beta]\in H_1(Y)$ and $\op{PD}$ denotes the Poincar\'e dual. By Remark~\ref{rem:additive}, we have
\begin{equation}
\label{eqn:Iadditive}
I(\alpha,\beta,Z) + I(\beta,\gamma,W) = I(\alpha,\gamma,Z+W).
\end{equation}

\subsection{Embedded contact homology}

In this subsection assume that the contact form $\lambda$ is nondegenerate. Let $\Gamma\in H_1(Y)$. We now review how to define the embedded contact homology $ECH_*(Y,\xi,\Gamma)$. More details may be found in \cite{bn}.

\begin{definition}
An {\em ECH generator\/} is an orbit set $\alpha=\{(\alpha_i,m_i)\}$ such that $m_i=1$ whenever $\alpha_i$ is hyperbolic.
\end{definition}

\begin{definition}
Define $ECC_*(Y,\lambda,\Gamma)$ to be the vector space\footnote{It is also possible to use $\Z$ coefficients, as explained in \cite[\S9]{obg2}, but this has not been necessary for the applications of ECH so far.} over $\Z/2$ generated by ECH generators $\alpha$ with $[\alpha]=\Gamma$. This vector space has a relative $\Z/d$ grading, where $d$ denotes the divisibility of $c_1(\xi)+2\op{PD}(\Gamma)\in H^2(Y;\Z)$; if $\alpha$ and $\beta$ are two generators, then their grading difference is $I(\alpha,\beta,Z) \mod d$ for any $Z\in H_2(Y,\alpha,\beta)$.  This makes sense by \eqref{eqn:indexambig} and \eqref{eqn:Iadditive}.
\end{definition}

\begin{remark}
\label{rem:absolute}
In the special case where $c_1(\xi)\in H^2(Y;\Z)$ is torsion and $\Gamma=0$, the chain complex $ECC_*(Y,\lambda,0)$ has a canonical absolute $\Z$-grading defined by
\[
I(\alpha) = I(\alpha,\emptyset,Z)\in \Z
\]
for any $Z\in H_2(Y,\alpha,\emptyset)$. This is well-defined by \eqref{eqn:indexambig}.
\end{remark}

\begin{definition}
An almost complex structure $J$ on $\R\times Y$ is {\em $\lambda$-compatible\/} if $J\partial_s=R$, where $s$ denotes the $\R$ coordinate; $J$ is invariant under the $\R$ action on $\R\times Y$ by translation of $s$; and $J(\xi)=\xi$, rotating positively with respect to $d\lambda$.
\end{definition}

If $J$ is a generic $\lambda$-compatible almost complex structure, one defines a differential
\[
\partial_J:ECC_*(Y,\lambda,\Gamma) \longrightarrow ECC_{*-1}(Y,\lambda,\Gamma)
\]
whose coefficient from $\alpha$ to $\beta$ is a count of ``$J$-holomorphic currents'' that represent classes $Z\in H_2(Y,\alpha,\beta)$ with ECH index $I(\alpha,\beta,Z)=1$. See \cite[\S3]{bn} for details. It is shown in \cite{obg1} that $\partial_J^2=0$. The {\em embedded contact homology\/} $ECH_*(Y,\lambda,\Gamma,J)$ is defined to be the homology of the chain complex $(ECC_*(Y,\lambda,\Gamma),\partial_J)$. A theorem of Taubes \cite{taubes-ech}, tensored with $\Z/2$, asserts that there is a canonical isomorphism
\begin{equation}
\label{eqn:taubes}
ECH_*(Y,\lambda,\Gamma,J) = \widehat{HM}^{-*}(Y,\frak{s}_\xi + \op{PD}(\Gamma))\tensor\Z/2,
\end{equation}
where the right hand side is a version of Seiberg-Witten Floer cohomology as defined by Kronheimer-Mrowka \cite{km}, and $\frak{s}_\xi$ is a spin-c structure on $Y$ determined by $\xi$. In particular, ECH depends only\footnote{In a sense, ECH does not depend on the contact structure either; see \cite[Rmk.\ 1.7]{bn} for explanation.} on the triple $(Y,\xi,\Gamma)$, and so we can denote it by $ECH_*(Y,\xi,\Gamma)$.

When $Y$ is connected, there is also a well-defined ``$U$-map''
\begin{equation}
\label{eqn:Umap}
U: ECH_*(Y,\xi,\Gamma) \longrightarrow ECH_{*-2}(Y,\xi,\Gamma).
\end{equation}
This is induced by a chain map
\[
U_{J,z}: (ECC_*(Y,\lambda,\Gamma),\partial_J) \longrightarrow (ECC_{*-2}(Y,\xi,\Gamma),\partial_J)
\]
which counts $J$-holomorphic currents with ECH index $2$ passing through a generic base point $z\in \R\times Y$. The assumption that $Y$ is connected implies that the induced map on homology does not depend on the choice of base point $z$; see \cite[\S2.5]{wh} for details. Taubes showed in \cite[Thm.\ 1.1]{taubes-U} that under the isomorphism \eqref{eqn:taubes}, the map on homology induced by $U_{J,z}$ agrees with a corresponding map on Seiberg-Witten Floer cohomology. We thus obtain a well-defined $U$-map \eqref{eqn:Umap}.

\begin{definition}
A {\em $U$-sequence} for $\Gamma$ is a sequence $\{\sigma_k\}_{k \ge 1}$ where each $\sigma_k$ is a nonzero homogeneous class in $ECH_*(Y,\xi,\Gamma)$, and $U\sigma_{k+1}=\sigma_k$ for each $k\ge 1$.
\end{definition}

We will need the following nontriviality result for the $U$-map, which is proved by combining Taubes's isomorphism \eqref{eqn:taubes} with results from Kronheimer-Mrowka \cite{km}:

\begin{proposition}
\label{prop:Useq}
\cite[Prop.\ 2.3]{twoinf}
If $c_1(\xi)+2\op{PD}(\Gamma)\in H^2(Y;\Z)$ is torsion, then a $U$-sequence for $\Gamma$ exists.
\end{proposition}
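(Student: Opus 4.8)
The plan is to use Taubes's isomorphism \eqref{eqn:taubes}, under which the ECH $U$-map corresponds to the $U$-map on Seiberg-Witten Floer cohomology, to reduce the claim to a purely gauge-theoretic statement: setting $\frak{s} = \frak{s}_\xi + \op{PD}(\Gamma)$, the torsion hypothesis is precisely that $c_1(\frak{s}) = c_1(\xi) + 2\op{PD}(\Gamma)$ is torsion, so we are in the setting of a torsion spin-c structure, and it suffices to exhibit an infinite sequence of nonzero homogeneous classes $\{\sigma_k\}$ in $\widehat{HM}^{-*}(Y,\frak{s}) \tensor \Z/2$ with $U\sigma_{k+1} = \sigma_k$. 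Everything then reduces to the structural results of Kronheimer-Mrowka \cite{km} for such spin-c structures.

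First I would recall the three flavors of monopole Floer homology $\widehat{HM}$, $\check{HM}$, $\overline{HM}$ from \cite{km}, together with the long exact sequence relating them and the compatible $U$-module structures. The key input is that for a torsion spin-c structure the ``bar'' version $\overline{HM}$ is computed from the reducible locus and is nonzero in infinitely many gradings, with $U$ acting invertibly; in particular it contains an infinite $U$-tower. By contrast $\widehat{HM}$ and $\check{HM}$ are finitely generated in each grading and vanish once the grading is sufficiently far in one direction.

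Next I would invoke the portion of the long exact sequence in which the comparison map between $\overline{HM}$ and $\widehat{HM}$ becomes an isomorphism: because the complementary term vanishes in a half-infinite range of gradings, this map identifies $\widehat{HM}$ with $\overline{HM}$ there, so $U$ acts as an isomorphism on a half-infinite tower of nonzero groups. Transporting this back through \eqref{eqn:taubes} and tensoring with $\Z/2$, I would then define the $U$-sequence by choosing any nonzero class $\sigma_1$ in this range and letting $\sigma_{k+1}$ be its unique $U$-preimage, which is nonzero precisely because $U$ is invertible in this range.

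The main obstacle is establishing the precise structural statement for $\overline{HM}$ of a torsion spin-c structure --- that it is infinitely generated with $U$ acting invertibly --- and then correctly matching gradings across the long exact sequence and across the duality between the homology and cohomology versions, so that $\widehat{HM}^{-*}$ genuinely inherits a half-infinite $U$-tower rather than a bounded one. One must also be attentive to the completion with respect to $U$ used in \cite{km} when asserting that coherent nonzero $U$-preimages exist in all degrees. These are exactly the points assembled in \cite[Prop.\ 2.3]{twoinf}, whose argument I would follow.
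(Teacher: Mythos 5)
Your proposal is correct and takes essentially the same route as the paper, which does not reprove this statement but cites \cite[Prop.\ 2.3]{twoinf} and describes that proof exactly as you outline it: transporting the problem through Taubes's isomorphism \eqref{eqn:taubes} to a torsion spin-c structure, and then using the Kronheimer--Mrowka \cite{km} structure theory (the $U$-invertible, infinitely generated $\overline{HM}$ together with the long exact sequence and the half-infinite vanishing of the complementary flavor) to produce the infinite tower of nonzero $U$-preimages.
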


\subsection{Spectral invariants}

If $\alpha=\{(\alpha_i,m_i)\}$ is an orbit set, define its {\em symplectic action\/} by
\[
\mc{A}(\alpha) = \sum_i m_i\int_{\alpha_i}\lambda.
\]
Note here that $\int_{\alpha_i}\lambda$ agrees with the period of $\alpha_i$, because $\lambda(R)=1$.

Assume now that $\lambda$ is nondegenerate. For $L\in\R$, define $ECC^L_*(Y,\lambda,\Gamma)$ to be the subspace of $ECC_*(Y,\lambda,\Gamma)$ spanned by ECH generators $\alpha$ with symplectic action $\mc{A}(\alpha) < L$. It follows from the definition of ``$\lambda$-compatible almost complex structure'' that $\partial_J$ maps $ECC^L$ to itself; see \cite[\S1.4]{bn}. We define the {\em filtered ECH\/}  to be the homology of this subcomplex, which we denote by $ECH^L_*(Y,\lambda,\Gamma)$. The inclusion of chain complexes induces a map
\[
\imath_L : ECH^L_*(Y,\lambda,\Gamma) \longrightarrow ECH_*(Y,\xi,\Gamma).
\]
It is shown in \cite[Thm.\ 1.3]{cc2} that the filtered homology $ECH_*^L(Y,\lambda,\Gamma)$ and the map $\imath_L$ do not depend on the choice of $J$. However, unlike the usual ECH, filtered ECH does depend on the contact form $\lambda$ and not just on the contact structure $\xi$.

\begin{definition}
If $0\neq \sigma\in ECH_*(Y,\xi,\Gamma)$, define the {\em spectral invariant\/}
\[
c_\sigma(Y,\lambda) = \inf\{L\mid \sigma\in\op{Im}(\imath_L)\} \in \R.
\]
\end{definition}

An equivalent definition is that $c_\sigma(Y,\lambda)$ is the minimum $L$ such that the class $\sigma$ can be represented by a cycle in the chain complex $(ECC_*(Y,\lambda,\Gamma),\partial_J)$ which is a sum of ECH generators each having symplectic action $\le L$. In particular, by definition $c_\sigma(Y,\lambda) = \mc{A}(\alpha)$ for some ECH generator $\alpha$ with $[\alpha]=\Gamma$.

We can change the contact form $\lambda$, without changing the contact structure $\xi$, by multiplying $\lambda$ by a smooth function $f:Y\to\R^{>0}$. As explained in \cite[\S2.5]{onetwo}, 
it turns out that even when $\lambda$ is degenerate, one can still define $c_\sigma(Y,\lambda)$ as a limit of spectral invariants $c_\sigma(Y,f_n\lambda)$ where $f_n\lambda$ is nondegenerate and $f_n\to 1$ in $C^0$.

These spectral invariants have the following important properties:

\begin{proposition}
Let $Y$ be a closed connected three-manifold, and let $\lambda$ be a (possibly degenerate) contact form on $Y$. Then:
\label{prop:spectral}
\begin{description}
\item{(a)} If $0\neq \sigma\in ECH_*(Y,\xi,\Gamma)$, then
\[
c_\sigma(Y,\lambda)=\mc{A}(\alpha)
\]
for some orbit set $\alpha$ with $[\alpha]=\Gamma$.
\item{(b)}  If $\sigma\in ECH_*(Y,\xi,\Gamma)$ and $U\sigma\neq 0$, then
\begin{equation}
\label{eqn:Uda}
c_{U\sigma}(Y,\lambda) \le c_\sigma(Y,\lambda).
\end{equation}
If there are only finitely many simple Reeb orbits, then the inequality \eqref{eqn:Uda} is strict.
\item{(c)} (``Volume Property'')  If $c_1(\xi)+2\op{PD}(\Gamma)\in H^2(Y;\Z)$ is torsion, and if $\{\sigma_k\}_{k\ge 1}$ is a $U$-sequence for $\Gamma$, then
\[
\lim_{k\to\infty} \frac{c_{\sigma_k}(Y,\lambda)^2}{k} = 2\op{vol}(Y,\lambda).
\]
\end{description}
\end{proposition}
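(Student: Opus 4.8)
The plan is to reduce every assertion to the nondegenerate case, where (a) and the non-strict part of (b) are essentially formal and (c) is the Volume Property of \cite{asymp}, and then to transfer each statement to a possibly degenerate $\lambda$ by approximation. The basic tool is the monotonicity estimate for spectral invariants: for contact forms $\lambda_1$ and $g\lambda_1$, with $g:Y\to\R^{>0}$ smooth, cobordism maps on filtered ECH (as in \cite{cc2}) give
\[
\min(g)\,c_\sigma(Y,\lambda_1) \le c_\sigma(Y,g\lambda_1) \le \max(g)\,c_\sigma(Y,\lambda_1).
\]
For nondegenerate $\lambda_1$ this is a direct consequence of the filtered theory, and applied with $g=f_n$, $f_n\to 1$, it shows that $\{c_\sigma(Y,f_n\lambda)\}$ is Cauchy with limit independent of the approximating sequence, which is how $c_\sigma(Y,\lambda)$ is defined when $\lambda$ is degenerate; since nondegeneracy is $C^\infty$-generic I may take $f_n\to 1$ in $C^\infty$, so that $R_{f_n\lambda}\to R_\lambda$ in $C^\infty$. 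Passing to the limit on both sides, the displayed estimate then holds for arbitrary $\lambda_1$.

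For (a) and nondegenerate $\lambda$, the set of symplectic actions of orbit sets has no finite accumulation point (finitely many simple orbits lie below any action bound, by Arzel\`a--Ascoli), so the infimum defining $c_\sigma$ is attained and equals $\mc{A}(\alpha)$ for an ECH generator $\alpha$ with $[\alpha]=\Gamma$. For degenerate $\lambda$, I would choose ECH generators $\alpha_n$ of $f_n\lambda$ with $[\alpha_n]=\Gamma$ and $\mc{A}_{f_n\lambda}(\alpha_n)=c_\sigma(Y,f_n\lambda)\to c_\sigma(Y,\lambda)$. The actions are bounded and the minimal simple period is bounded below (by continuity under $C^\infty$-convergence), so the total multiplicities stay bounded; by Arzel\`a--Ascoli applied to $R_{f_n\lambda}\to R_\lambda$, the underlying orbits converge along a subsequence to Reeb orbits of $\lambda$. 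Summing the multiplicities of orbits that collide in the limit produces an orbit set $\alpha$ of $\lambda$ with $[\alpha]=\Gamma$ and $\mc{A}_\lambda(\alpha)=c_\sigma(Y,\lambda)$. The one point requiring care is this possible collision of distinct orbits of $f_n\lambda$ onto a single orbit of $\lambda$, which is harmless once multiplicities are collected.

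For (b), the non-strict inequality for nondegenerate $\lambda$ is formal: the chain map $U_{J,z}$ counts index-$2$ currents, and a current from $\alpha$ to $\beta$ has $\mc{A}(\alpha)-\mc{A}(\beta)$ equal to its nonnegative $d\lambda$-energy, so a representative of $\sigma$ of action $\le L$ is sent to a representative of $U\sigma$ of action $\le L$; this passes to the limit to give \eqref{eqn:Uda} for degenerate $\lambda$. The strict inequality under the finiteness hypothesis is the delicate point. I would argue by contradiction: if $c_{U\sigma}(Y,\lambda)=c_\sigma(Y,\lambda)=L$, then for nondegenerate approximations $f_n\lambda$ one obtains $U$-curves whose input and output actions both converge to $L$, so their $d\lambda$-energies tend to $0$. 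Applying SFT compactness to these curves and passing to the limit yields a nonconstant $J$-holomorphic current in $\R\times Y$ of zero $d\lambda$-energy passing through the fixed base point; but a zero-energy current is a union of branched covers of $\R$-invariant cylinders over Reeb orbits, which cannot meet a generic point $z$. This contradiction, modeled on \cite{onetwo}, gives strictness.

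Finally (c) follows by sandwiching. For each nondegenerate $f_n\lambda$ the Volume Property of \cite{asymp} gives $\lim_k c_{\sigma_k}(Y,f_n\lambda)^2/k = 2\op{vol}(Y,f_n\lambda)$, while the monotonicity estimate (with $\lambda_1=\lambda$, $g=f_n$) yields $\min(f_n)^2\,c_{\sigma_k}(Y,\lambda)^2 \le c_{\sigma_k}(Y,f_n\lambda)^2 \le \max(f_n)^2\,c_{\sigma_k}(Y,\lambda)^2$ for all $k$. Dividing by $k$ and letting $k\to\infty$ traps $\liminf_k$ and $\limsup_k$ of $c_{\sigma_k}(Y,\lambda)^2/k$ between $2\op{vol}(Y,f_n\lambda)/\max(f_n)^2$ and $2\op{vol}(Y,f_n\lambda)/\min(f_n)^2$. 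Letting $n\to\infty$, using $\max(f_n),\min(f_n)\to 1$ and the $C^0$-continuity $\op{vol}(Y,f_n\lambda)=\int_Y f_n^2\,\lambda\wedge d\lambda \to \op{vol}(Y,\lambda)$, both bounds converge to $2\op{vol}(Y,\lambda)$, proving the claim. I expect the main obstacles to be the compactness argument in (a) and especially the exclusion of zero-energy $U$-curves through a generic base point in the strict case of (b); part (c) is then mechanical once the nondegenerate Volume Property and the monotonicity estimate are in hand.
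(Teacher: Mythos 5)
Your proposal is correct and takes essentially the same route as the paper, whose proof of this proposition consists of citations: part (a) and the strict inequality in (b) are deferred to the compactness arguments of \cite{onetwo} (Lem.\ 3.1(a),(b)), and part (c) to \cite{asymp} (Thm.\ 1.3). The arguments you reconstruct---Reeb-orbit compactness via bounded multiplicities and Arzel\`a--Ascoli, the action-decreasing property of the $U$ chain map plus exclusion of zero-energy limit currents through a base point chosen off the finitely many trivial cylinders, and the monotonicity/sandwich reduction of the Volume Property to the nondegenerate case---are precisely the ones underlying those citations.
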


\begin{proof}
As noted above, part (a) holds by definition when $\lambda$ is nondegenerate, and in the degenerate case it follows from a compactness argument for Reeb orbits; cf.\ \cite[Lem.\ 3.1(a)]{onetwo}.

If $\lambda$ is nondegenerate, then since the chain map $U_{J,z}$ counts $J$-holomorphic curves, it decreases symplectic action like the differential, so strict inequality in \eqref{eqn:Uda} holds. The not necessarily strict inequality \eqref{eqn:Uda} in the degenerate case follows by a limiting argument. The fact that the inequality \eqref{eqn:Uda} is strict for degenerate contact forms with only finitely many simple Reeb orbits\footnote{The equality $c_{U\sigma}(Y,\lambda) = c_\sigma(Y,\lambda)$ is possible for degenerate contact forms with infinitely many simple Reeb orbits. This happens for example for some classes $\sigma$ when $Y$ is an ellipsoid $\partial E(a,b)$ with $a/b$ rational.} is proved by a more subtle compactness argument for holomorphic curves in \cite[Lem.\ 3.1(b)]{onetwo}. 

Part (c), the most nontrivial part, is a special case of \cite[Thm.\ 1.3]{asymp}.
\end{proof}


\section{The ECH index and perturbations}
\label{sec:estimate}

The goal of this section is to prove Proposition~\ref{prop:perturb} below, which gives an upper bound on how much the ECH index can change when one perturbs the contact form. This is an important ingredient in the proof of Theorems~\ref{thm:nondeg} and \ref{thm:dynamics}.

To state the proposition, let $\lambda$ be a contact form on a closed three-manifold $Y$, and let $\lambda_n=f_n\lambda$ be a sequence of contact forms with $f_n\to 1$ in $C^2$. In the case of interest, $\lambda$ will be degenerate, while each of the contact forms $\lambda_n$ will be nondegenerate.

Fix an orbit set $\alpha=\{(\alpha_i,m_i)\}$ for $\lambda$, and let $N$ be a disjoint union of tubular neighborhoods $N_i$ of the simple Reeb orbits $\alpha_i$. Consider a sequence of orbit sets $\alpha(n)$ for $\lambda_n$ that converges to $\alpha$ as currents. In particular this implies that if $n$ is sufficiently large, and if we write $\alpha'=\alpha(n)$, then $\alpha'$ is contained in $N$, and its intersection with $N_i$ is homologous in $N_i$ to $m_i\alpha_i$. There is then a unique $W_\alpha\in H_2(Y,\alpha',\alpha)$ that is contained in $N$.

Likewise fix an orbit set $\beta=\{(\beta_j,n_j)\}$ for $\lambda$ along with disjoint tubular neighborhoods of the simple Reeb orbits $\beta_j$, and consider a sequence of orbit sets $\beta(n)$ for $\lambda_n$ that converges to $\beta$ as currents. Then for $k$ sufficiently large, writing $\beta'=\beta(n)$, we obtain a distinguished $W_\beta\in H_2(Y,\beta',\beta)$.

For fixed large $n$ there is now a bijection
\[
H_2(Y,\alpha,\beta)\simeq H_2(Y,\alpha',\beta')
\]
sending $Z\in H_2(Y,\alpha,\beta)$ to
\[
Z' = Z + W_\alpha - W_\beta \in H_2(Y,\alpha',\beta').
\]

\begin{proposition}
\label{prop:perturb}
With the notation as above, for fixed orbit sets $\alpha$ and $\beta$, if $n$ is sufficiently large, then
\[
\big| I(\alpha,\beta,Z) - I(\alpha',\beta',Z') \big| \le 2 \left(\sum_im_i + \sum_jn_j\right).
\]
Here $I(\alpha,\beta,Z)$ denotes the ECH index for $\lambda$, and $I(\alpha',\beta',Z')$ denotes the ECH index for $\lambda_n$.
\end{proposition}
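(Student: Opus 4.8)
The plan is to reduce the global index difference to a sum of purely local contributions in tubular neighborhoods of the simple orbits, and there to bound the change in each ingredient of \eqref{eqn:I} using the convergence of the rotation numbers.

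First I would fix, once and for all, a symplectic trivialization $\tau$ of $\xi$ that extends over the tubular neighborhoods $N_i$ of the $\alpha_i$ and over analogous neighborhoods of the $\beta_j$; this is possible because each $N_i$ is a solid torus, over which $\xi$ is trivial. Since the ECH index is independent of $\tau$, I may use this $\tau$ to compute both $I(\alpha,\beta,Z)$ and $I(\alpha',\beta',Z')$. Writing $Z' = W_\alpha + Z - W_\beta$ and applying the additivity of $c_\tau$ and $Q_\tau$ (Remark~\ref{rem:additive}) together with the orbit-by-orbit definition \eqref{eqn:CZI} of $\op{CZ}^I_\tau$, the index difference telescopes to
\[
I(\alpha',\beta',Z') - I(\alpha,\beta,Z) = I(\alpha',\alpha,W_\alpha) - I(\beta',\beta,W_\beta),
\]
where each term on the right is the evident integer combination of $c_\tau$, $Q_\tau$ and Conley--Zehnder indices, the latter computed with $\lambda_n$ for the primed orbits and with $\lambda$ for the unprimed ones. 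Because $W_\alpha$ is supported in $\sqcup_i N_i$, a further application of additivity over the disjoint $N_i$ reduces everything to the following local estimate: for a single simple orbit $\gamma$ of multiplicity $m$, with $\alpha'_{\mathrm{loc}}$ the part of $\alpha'$ in its neighborhood $N$ and $W\in H_2(N,\alpha'_{\mathrm{loc}},m\gamma)$ the unique local class,
\[
\big| I(\alpha'_{\mathrm{loc}}, m\gamma, W) \big| \le 2m,
\]
together with the symmetric statement for the $\beta_j$. Summing over $i$ and $j$ then gives the proposition.

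For the local estimate the relative Chern class drops out: since $\tau$ extends over the solid torus $N$, the $\tau$-constant section of $\xi$ is nonvanishing over any surface representing $W$, so $c_\tau(\alpha'_{\mathrm{loc}},m\gamma,W)=0$. It remains to bound $Q_\tau(W,W) + \op{CZ}^{I,\lambda_n}_\tau(\alpha'_{\mathrm{loc}}) - \op{CZ}^{I,\lambda}_\tau(m\gamma)$. Here I would use that $f_n\to 1$ in $C^2$, which makes the Reeb fields converge in $C^1$ and hence the linearized return maps, and therefore the rotation numbers, converge: writing $\theta = \theta_\tau(\gamma)$, each simple orbit $\gamma'_a$ of $\alpha'_{\mathrm{loc}}$, appearing with multiplicity $m'_a$, winds some number $q_a\ge 1$ of times around $\gamma$ (so that $\sum_a q_a m'_a = m$), and its rotation number satisfies $\theta_\tau(\gamma'_a)\to q_a\theta$ as $n\to\infty$. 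Consequently, for $n$ large the floors and ceilings in \eqref{eqn:CZdef} for the primed orbits agree with those of the corresponding iterates $\gamma^{kq_a}$, so $\op{CZ}^{I,\lambda_n}_\tau(\alpha'_{\mathrm{loc}})$ becomes an explicit sum of terms $\lfloor kq_a\theta\rfloor$ and $\lceil kq_a\theta\rceil$; the same rotation numbers control the asymptotic linking and writhe of the braids formed by $\alpha'_{\mathrm{loc}}$ and by $m\gamma$, hence control $Q_\tau(W,W)$ through its definition \eqref{eqn:Q} in terms of intersection and linking numbers.

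The main obstacle is precisely this last step: one must show that the ``large'' parts of $Q_\tau(W,W)$ and of the two Conley--Zehnder sums---each of size $\sim\theta m^2$ and sensitive to how the possibly degenerate cover $\gamma^m$ breaks up into the orbits $\gamma'_a$ of winding $q_a$---cancel, the constraint $\sum_a q_a m'_a = m$ being what makes the cancellation possible. Once this is established, the surviving discrepancy is pure rounding: it comes from replacing $2x$ by $\lfloor x\rfloor + \lceil x\rceil$ in the Conley--Zehnder indices and from the integrality of the linking numbers, and since there are at most $m$ strands on each of the two ends it is bounded by $2m$. The degenerate case where some $kq_a\theta$ is an integer, at which the floors and ceilings can jump under perturbation, is absorbed into the same $O(m)$ error by taking $n$ sufficiently large. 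Carrying out this writhe/Conley--Zehnder bookkeeping carefully, via the linking and partition estimates of \cite{ir,bn}, is the technical heart of the argument.
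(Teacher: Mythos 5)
Your reduction to a local estimate is sound and is essentially the paper's own: the relative Chern class is unchanged because $\tau$ extends over the solid tori, $Q_\tau$ changes exactly by the writhes of the local braids (this is Lemma~\ref{lem:Qlocal}), and what remains is precisely the bound of Proposition~\ref{prop:local} on $-w_\tau(\gamma')-\op{CZ}^I_{\tau'}(\gamma')+\sum_{l=1}^m \op{CZ}_\tau(\gamma^l)$ for each simple orbit separately. You also correctly identify the cancellation that must occur between the quadratic (size $\sim\theta m^2$) parts of the writhe and of the two Conley--Zehnder sums, and correctly estimate the residual rounding error by $2m$.

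However, there is a genuine gap at exactly the point you yourself flag as ``the main obstacle'': you assert that the writhes and mutual linking numbers of the new orbits are ``controlled'' by their rotation numbers, and that the cancellation can then be carried out ``via the linking and partition estimates of \cite{ir,bn}''. Neither claim holds as stated. The rotation number of a braid does not determine its writhe or its linking with other braids; for the quadratic terms to cancel to $O(m)$ one needs exact values, namely: when $\theta=a/b$ in lowest terms with $b>1$, every new simple orbit other than the continuation $\gamma_0'$ of $\gamma$ has exactly $b$ strands, writhe $a(b-1)$, linking number $a$ with $\gamma_0'$, and mutual linking numbers $ab$; and when $\theta=a\in\Z$, every new orbit has a single strand and pairwise linking numbers $a$. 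These are Lemmas~\ref{lem:integerbraids} and~\ref{lem:rationalbraids}. If, say, a $b$-strand orbit could appear with a different writhe, or a $2b$-strand orbit could appear at all, your quadratic forms would not cancel. Moreover these facts are not index-theoretic bookkeeping and are not contained in \cite{ir} or \cite{bn}: the writhe estimates there concern braids arising as asymptotic ends of holomorphic curves near nondegenerate orbits, not Reeb orbits spawned by a $C^2$-small perturbation of a \emph{degenerate} orbit. Establishing them is the new dynamical content of the paper's \S\ref{sec:rls}--\S\ref{sec:bangert}: Bangert's theorem on $C^1$ perturbations of flows (Lemma~\ref{lemma_Bangert}, via Corollary~\ref{cor:Bangert}) pins down the possible strand numbers, a $b$-fold cyclic covering trick reduces the case $\theta=a/b$ to the integer case, and a fundamental-theorem-of-calculus winding-number argument computes the linking numbers exactly. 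Without this input your argument does not close.
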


\subsection{Reduction to a local statement}
\label{sec:rls}

We now reduce Proposition~\ref{prop:perturb} to a local statement, Proposition~\ref{prop:local} below.

Let $\gamma$ be an oriented knot in $Y$, and let $N$ be a tubular neighborhood of $\gamma$ with an identification $N\simeq S^1\times D^2$. By a ``braid in $N$ with $d$ strands'', we mean an oriented knot in $N$ which is positively transverse to the $D^2$ fibers and which intersects each fiber $d$ times.

\begin{definition}
\label{def:wb}
\begin{itemize}
\item
A {\em weighted braid} in $N$ with $m$ strands is a finite set of pairs $\zeta=\{(\zeta_i,m_i)\}$ where the $\zeta_i$ are disjoint braids in $N$ with $d_i$ strands, the $m_i$ are positive integers, and $\sum_i m_i d_i = m$.
\item
If $\tau$ is an oriented trivialization of the normal bundle of $\gamma$, then for $i\neq j$ there is a well-defined linking number $\ell_\tau(\zeta_i,\zeta_j)\in\Z$, as discussed in \S\ref{sec:topological}. Similarly, for each $i$ there is a well-defined writhe $w_\tau(\gamma_i)\in\Z$; see \cite[\S2.6]{ir}. Define the {\em writhe} of the weighted braid $\zeta$ by
\begin{equation}
\label{eqn:wwb}
w_\tau(\zeta) = \sum_im_i^2w_\tau(\zeta_i) + \sum_{i\neq j}m_i m_j\ell_\tau(\zeta_i,\zeta_j).
\end{equation}
\end{itemize}
\end{definition}

Suppose now that $\gamma$ is a simple Reeb orbit for $\lambda$, and that the normal bundle identification $N\simeq S^1\times D^2$ above is chosen so that the Reeb vector field for $\lambda$ is transverse to the $D^2$ fibers. If $\lambda'=f\lambda$ with $f$ sufficiently $C^2$ close to $1$, then the Reeb vector field for $\lambda'$ in $N$ is also transverse to the $D^2$ fibers. Suppose that this is the case.

Let $\gamma'=\{(\gamma'_k,m_k)\}$ be an orbit set for $\lambda'$ which is contained in $N$. We can regard $\gamma'$ as a weighted braid with $m$ strands for some positive integer $m$. Also note that a trivialization $\tau$ of $\gamma^*\xi$ extends to a trivialization of $\xi$ over the entire tubular neighborhood $N$, and thus canonically induces a homotopy class of trivialization $\tau'$ of $\xi$ over the Reeb orbits $\gamma_k'$. We can now state:

\begin{proposition}
\label{prop:local}
With the notation as above, if $\lambda'$ is sufficiently $C^2$ close to $\lambda$, and if $\gamma'$ is sufficiently close to $m\gamma$ as a current, then
\begin{equation}
\label{eqn:local}
\left|-w_\tau(\gamma') - \op{CZ}_{\tau'}^I(\gamma') + \sum_{l=1}^m\op{CZ}_\tau(\gamma^l)\right| \le 2m.
\end{equation}
\end{proposition}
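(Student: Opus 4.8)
The plan is to evaluate each of the three terms in \eqref{eqn:local} in terms of the rotation number $\theta := \theta_\tau(\gamma)$ and the combinatorics of the weighted braid $\gamma'=\{(\gamma'_k,m_k)\}$ (where $\gamma'_k$ has $d_k$ strands, so $\sum_k m_kd_k=m$), and to verify that the parts proportional to $\theta$ cancel, leaving a remainder bounded by $2m$. Writing $f(x)=\floor{x}+\ceil{x}$, so that $\op{CZ}_\tau(\gamma^l)=f(l\theta)$ and $\op{CZ}_{\tau'}((\gamma'_k)^j)=f(\theta_{\tau'}((\gamma'_k)^j))$, I would first observe that the left-hand side of \eqref{eqn:local} is unchanged under replacing $\tau$ by $\tau$ plus one positive twist: under such a change $\sum_l\op{CZ}_\tau(\gamma^l)$ increases by $m(m+1)$, while $\op{CZ}^I_{\tau'}(\gamma')$ increases by $\sum_k d_km_k(m_k+1)$, and $w_\tau(\gamma')$ increases by $m^2-\sum_k m_k^2 d_k$ (using \eqref{eqn:wwb} together with the facts that one twist changes $w_\tau(\gamma'_k)$ by $d_k(d_k-1)$ and $\ell_\tau(\gamma'_k,\gamma'_{k'})$ by $d_kd_{k'}$, and the identity $\big(\sum_k m_kd_k\big)^2=\sum_k m_k^2d_k^2+\sum_{k\neq k'}m_km_{k'}d_kd_{k'}$); these three changes cancel. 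I may therefore assume $\theta>0$.

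Step 1 (rotation numbers). Since $\lambda'\to\lambda$ in $C^2$ and $\gamma'$ converges to $m\gamma$ as a current, the linearized return map of the orbit $\gamma'_k$, which wraps $d_k$ times around $N$, converges to that of $\gamma^{d_k}$; as the dynamical rotation number is continuous under $C^1$-convergence of the linearized flow and is multiplicative under iteration, $\theta_{\tau'}((\gamma'_k)^j)\to jd_k\theta$. Hence for $n$ large $\op{CZ}_{\tau'}((\gamma'_k)^j)=2jd_k\theta+\epsilon_{k,j}$ with $|\epsilon_{k,j}|\le 1$, using that $f(x)-2x\in(-1,1]$. Step 2 (writhe) is the crux: I claim that for each $k$, either $d_k=1$ or $d_k\theta\in\Z$. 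Indeed, if $d_k>1$ and $d_k\theta\notin\Z$, then $\gamma^{d_k}$ is a nondegenerate Reeb orbit of $\lambda$, so by persistence of nondegenerate orbits no connected $d_k$-strand braid of $\lambda'$ can sit near $\gamma^{d_k}$ once $\lambda'$ is sufficiently close (only finitely many values $d_k\le m$ arise, so one smallness suffices). When $d_k>1$ the resonance $d_k\theta\in\Z$ forces the strands of $\gamma'_k$ to rotate monotonically (since $\theta>0$) at total rate $2\pi\theta$ per turn, so $\gamma'_k$ is isotopic to the positive torus braid $(\sigma_1\cdots\sigma_{d_k-1})^{d_k\theta}$, giving $w_\tau(\gamma'_k)=(d_k-1)d_k\theta$ exactly (and $w_\tau(\gamma'_k)=0$ when $d_k=1$); the same monotonicity gives $\ell_\tau(\gamma'_k,\gamma'_{k'})=d_kd_{k'}\theta$ exactly, an integer whenever two distinct braids coexist. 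Substituting into \eqref{eqn:wwb} and using the identity above yields
\[
w_\tau(\gamma')=\theta\Big(m^2-\sum_k m_k^2 d_k\Big).
\]

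Step 3 (combine). By Step 1, $\op{CZ}^I_{\tau'}(\gamma')=\theta\sum_k d_km_k(m_k+1)+G_2$ with $G_2=\sum_k\sum_{j=1}^{m_k}\epsilon_{k,j}$, so $|G_2|\le\sum_k m_k$; and $\sum_{l=1}^m\op{CZ}_\tau(\gamma^l)=\theta\,m(m+1)+G_1$ with $G_1=\sum_{l=1}^m\big(f(l\theta)-2l\theta\big)$, so $|G_1|\le m$. Inserting the Step 2 formula and the identity $\sum_k d_km_k(m_k+1)=\sum_k m_k^2 d_k+m$, the coefficient of $\theta$ in $-w_\tau(\gamma')-\op{CZ}^I_{\tau'}(\gamma')+\sum_l\op{CZ}_\tau(\gamma^l)$ is $-(m^2-\sum_k m_k^2 d_k)-(\sum_k m_k^2 d_k+m)+m(m+1)=0$, so the whole expression equals $G_1-G_2$. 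Therefore its absolute value is at most $|G_1|+|G_2|\le m+\sum_k m_k\le 2m$, since $\sum_k m_k\le\sum_k m_kd_k=m$. This is exactly the bound \eqref{eqn:local}.

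The main obstacle is Step 2: one must both rule out nearby higher-multiplicity braids except at exact resonances of the rotation number, and at those resonances compute the writhe and linking numbers \emph{exactly}. Exactness (rather than an $O(1)$ estimate) is essential, because these quantities enter \eqref{eqn:wwb} weighted by $m_k^2$ and $m_km_{k'}$, so any surviving rounding error would be amplified beyond the linear bound $2m$. Both the persistence argument and the identification of $\gamma'_k$ with a torus braid require care with a normal-form description of the flow near the (possibly degenerate) orbit $\gamma$, and this is where I would expect the technical work to concentrate.
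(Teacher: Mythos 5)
Your Steps 1 and 3, and the normalization of the trivialization, are fine: they reproduce exactly the algebra in the paper's own proof of Proposition~\ref{prop:local} (the combination of \eqref{eqn:comp1}, \eqref{eqn:comp2}, \eqref{eqn:comp3}), and your claimed writhe and linking values agree with Lemmas~\ref{lem:integerbraids} and \ref{lem:rationalbraids} in the cases where they overlap. The first half of your Step 2 is also correct and matches the paper: if $d_k\theta\notin\Z$ then $\gamma^{d_k}$ is nondegenerate, and the implicit function theorem applied to the $d_k$-th return map rules out a connected $d_k$-strand braid nearby.

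The genuine gap is the second half of Step 2, which you justify by asserting that at a resonance $d_k\theta\in\Z$ the strands ``rotate monotonically,'' making $\gamma_k'$ a positive torus braid with exactly computable writhe. But resonance is precisely the situation where $\gamma^{d_k}$ (or its lift to a cyclic cover) may be \emph{degenerate}: the angular velocity of the difference of two nearby trajectories can then change sign, and no monotonicity is available. (The torus-braid statement is exactly what the paper declines to prove in Remark~\ref{rmk:torus}.) What a linearization argument actually gives --- this is the fundamental-theorem-of-calculus step in the paper's proof of Lemma~\ref{lem:integerbraids}(b) --- is that the \emph{total} winding of the difference of two strands over a full period is within strictly less than $1$ of the linearized rotation number; this determines the value exactly only when that winding is a priori an integer, which holds for two \emph{distinct} simple orbits but not for two strands of one connected braid. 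Concretely, your framework (which only forces $d_k\in\{1\}\cup b\Z$, not $d_k\in\{1,b\}$) permits, when $\theta=a\in\Z$, a connected $2$-strand braid with claimed writhe $2a$; but a connected $2$-strand braid is an odd power of the generator of $B_2$ and so has \emph{odd} writhe, so your exact formula would be false whenever such a braid exists, and linearization can only bound its writhe to lie in $\{2a-1,2a+1\}$. Such $\pm1$ errors enter \eqref{eqn:wwb} with weight $m_k^2$ and destroy the linear bound $2m$, as you yourself note. The missing ingredient is the paper's \S\ref{sec:bangert}: Bangert's theorem on $C^1$ perturbations of possibly degenerate closed orbits (Lemma~\ref{lemma_Bangert}, Corollary~\ref{cor:Bangert}) shows that \emph{no} multi-strand braid can converge to a cover of an orbit whose linearized return map has positive real eigenvalues, i.e.\ integer rotation number. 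This simultaneously (i) excludes your unresolved cases $d_k=2b,3b,\dots$, and (ii) after passing to the $b$-fold cyclic cover, splits a $b$-strand braid into $b$ \emph{distinct} one-strand orbits, whose pairwise linking numbers are integers and hence are computed exactly by the linearization-plus-integrality argument; the formula $w_\tau=a(b-1)$ then follows by averaging over deck transformations. Without Bangert's theorem or a substitute for it, your Step 2 cannot be completed.
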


\begin{proof}[Proof of Proposition~\ref{prop:perturb}, assuming Proposition~\ref{prop:local}]
By shrinking the tubular neighborhoods, we can assume without loss of generality that the chosen tubular neighborhood of each orbit $\alpha_i$ or $\beta_j$ has an identification with $S^1\times D^2$ with the Reeb flow of $\lambda$ transverse to the $D^2$ fibers.

In the orbit set $\alpha'$, each pair $(\alpha_i,m_i)$ gets replaced by an orbit set $\alpha_i'$ which represents a weighted braid with $m_i$ strands in the tubular neighborhood of $\alpha_i$. Likewise, each pair $(\beta_j,n_j)$ gets replaced by an orbit set $\beta_j'$ which represents a weighted braid with $n_j$ strands in the tubular neighborhood of $\beta_j$. Let $\tau$ be a homotopy class of symplectic trivializations of $\xi$ over the Reeb orbits $\alpha_i$ and $\beta_j$. As in Proposition~\ref{prop:local}, this canonically induces a homotopy class of symplectic trivializations $\tau'$ over the Reeb orbits in the orbit sets $\alpha_i'$ and $\beta_j'$.

Because $\tau$ and $\tau'$ extend to a trivialization of $\xi$ over the tubular neighborhoods containing $W_\alpha$ and $W_\beta$, it follows from the definition of the relative first Chern class that
\begin{equation}
\label{eqn:ctau}
c_{\tau'}(\alpha',\beta',Z') = c_{\tau}(\alpha,\beta,Z).
\end{equation}
By Proposition~\ref{prop:local}, if $n$ is sufficiently large then
\begin{equation}
\label{eqn:bylocal}
\begin{split}
\left| -w_\tau(\alpha_i') - \op{CZ}_{\tau'}^I(\alpha_i') + \sum_{k=1}^{m_i}\op{CZ}_\tau(\alpha_i^k)\right| \le 2m_i,\\
\left| -w_\tau(\beta_j') - \op{CZ}_{\tau'}^I(\beta_j') + \sum_{l=1}^{n_j}\op{CZ}_\tau(\beta_j^l) \right| \le 2n_j.
\end{split}
\end{equation}
By equations \eqref{eqn:I}, \eqref{eqn:ctau}, and \eqref{eqn:bylocal}, to complete the proof of Proposition~\ref{prop:perturb} it is enough to show that
\begin{equation}
\label{eqn:Qtau}
Q_{\tau'}(\alpha',\beta',Z') = Q_\tau(\alpha,\beta,Z) + \sum_iw_\tau(\alpha'_i) - \sum_jw_\tau(\beta'_j).
\end{equation}

To prove \eqref{eqn:Qtau}, by Remark~\ref{rem:additive} it is enough to show that
\[
\begin{split}
Q_\tau(\alpha',\alpha,W_\alpha) &= \sum_iw_\tau(\alpha_i'),\\
Q_\tau(\beta',\beta,W_\beta) &= \sum_jw_\tau(\beta_j').
\end{split}
\]
Since the chosen tubular neighborhoods of the Reeb orbits of $\alpha_i$ are disjoint, and the chosen tubular neighborhoods of the Reeb orbits of $\beta_j$ are disjoint, the above equations follow from Lemma~\ref{lem:Qlocal} below.
\end{proof}

\begin{lemma}
\label{lem:Qlocal}
Let $\zeta=\{(\zeta_i,m_i)\}$ be a weighted braid with $m$ strands as in Definition~\ref{def:wb}. Let $W$ be the unique relative homology class in $H_2(N,\zeta,{(\gamma,m)})$. Then
\begin{equation}
\label{eqn:Qlocal}
Q_\tau(\zeta,{(\gamma,m)},W) = w_\tau(\zeta).
\end{equation}
\end{lemma}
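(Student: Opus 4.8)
The plan is to evaluate $Q_\tau(\zeta,(\gamma,m),W) = Q_\tau(W,W)$ directly from its definition \eqref{eqn:Q} by choosing convenient admissible representatives and computing the three terms (the interior intersection count and the two boundary linking corrections). The natural representative is the \emph{radial cone}: fix the identification $N \simeq S^1\times D^2$ and a monotone function $\rho:[-1,1]\to[0,1]$ with $\rho(1)=1$ and $\rho(-1)=0$, and let $S$ be the surface in $[-1,1]\times N$ whose slice at height $s$ is the braid $\zeta$ scaled radially by $\rho(s)$. This collapses $\zeta$ at the top to the $m$-fold cover of the core $\gamma$ at the bottom, so it represents $W$, and since distinct strands stay disjoint under scaling, $S$ is embedded in its interior. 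For $S'$ I would take a pushoff of $S$ in the $\tau$-direction, with the pushoff vector tapering to zero as $s\to-1$ so that $S'$ is again admissible and the two surfaces are transverse and disjoint near the boundary.

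I would then organize the computation by decomposing the representative as a disjoint union $S=\bigcup_i S_i$, where $S_i$ is the cone on the $m_i$-fold cable of the single component $\zeta_i$, so that $\#(\op{int}(S)\cap\op{int}(S'))=\sum_{i,j}\#(\op{int}(S_i)\cap\op{int}(S_j'))$ and the boundary braids split accordingly. Comparing with \eqref{eqn:wwb}, it then suffices to establish the off-diagonal identity $Q_\tau(S_i,S_j')=m_im_j\ell_\tau(\zeta_i,\zeta_j)$ for $i\ne j$ and the diagonal identity giving $m_i^2 w_\tau(\zeta_i)$. The off-diagonal case is the cleaner one: the cones $S_i$ and $S_j$ lie over disjoint braids, so after the pushoff their interiors meet exactly where a scaled strand of $\zeta_i$ coincides with one of $\zeta_j$, namely at solutions of $\rho(s)\bigl(w_a(\theta)-w_b(\theta)\bigr)=(\text{pushoff vector})$; a winding-number count identifies the signed number of such points with $\ell_\tau(\zeta_i,\zeta_j)$, weighted by $m_im_j$, and the boundary corrections drop out because both cones collapse to the same core.

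I expect the diagonal self-intersection term to be the main obstacle. Here the interior intersections of $S_i$ with its own pushoff again reduce to a winding-number problem in the difference of two strands of the cable, which on general grounds is governed by the writhe $w_\tau(\zeta_i)$; but one must (i) track the orientation sign of each intersection point against the ambient orientation of $[-1,1]\times N$, (ii) match the framing used for the local linking corrections $\ell_\tau(\zeta^\pm,\zeta^{\pm\prime})$ near the single orbit $\gamma$ with the trivialization $\tau$ of its normal bundle, being careful that the correction near $\gamma$ is the self-linking of the \emph{whole} bottom braid while the contributions near the cables $\zeta_i$ are per-component, and (iii) verify that the interior count and the surviving boundary corrections combine to give exactly $w_\tau(\zeta_i)$ rather than a competing combination. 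This is precisely the bookkeeping that must be carried out using the sign and framing conventions of \cite[\S2.6]{ir} for the writhe and linking number, after which summing the diagonal and off-diagonal contributions yields \eqref{eqn:Qlocal} via \eqref{eqn:wwb}.
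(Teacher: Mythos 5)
Your framework---building an explicit radial-cone representative and evaluating the three terms of \eqref{eqn:Q}---is also the paper's starting point, but your choice of the second surface creates a problem that the proposal does not resolve. With $S'$ a pushoff of $S$ in a fixed $\tau$-direction whose size $\epsilon(s)$ tapers to zero at $s=-1$, the entire structure of the computation depends on the ratio $\epsilon(s)/\rho(s)$, which you never specify. An interior intersection between the sheet of $S$ over a strand $w_a$ and the sheet of $S'$ over a strand $w_b$ is a solution of $\rho(s)\bigl(w_a(t)-w_b(t)\bigr)=\epsilon(s)v(t)$, i.e.\ $w_a(t)-w_b(t)=\tfrac{\epsilon(s)}{\rho(s)}v(t)$. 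Since $|w_a-w_b|$ is bounded below on a braid, if the taper satisfies $\epsilon(s)\le\delta\rho(s)$ with $\delta$ small---a perfectly natural choice---there are \emph{no} interior intersections at all, and all of $Q_\tau$ is carried by the boundary correction at $\gamma$: in that regime your claim that the corrections ``drop out because both cones collapse to the same core'' is false, and the interior count certainly does not produce $m_im_j\ell_\tau(\zeta_i,\zeta_j)$. Your described picture is only correct in the opposite regime $\epsilon(s)/\rho(s)\to\infty$ near $s=-1$, and there the hard part---the signed winding-number count showing that the interior intersections total exactly $w_\tau(\zeta)$ with the sign and framing conventions of \cite[\S2.6]{ir}---is precisely what your last paragraph defers as ``bookkeeping''. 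So as written the proposal is a plan whose central computation is missing, together with a structural claim that fails for half of the admissible tapers.

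The paper sidesteps all of this with a different choice of $S'$: not a pushoff of $S$ in a fixed direction, but a second radial cone, over a $\tau$-parallel copy of the resolved braid (each $\zeta_i$ replaced by $m_i$ $\tau$-parallel copies). Two radial cones over disjoint braids have disjoint interiors, so the intersection term in \eqref{eqn:Q} vanishes identically; the corrections at the top vanish because near each $\zeta_i$ both surfaces cut out $\tau$-parallel trivial braids with linking number zero; and the single correction at $\gamma$ is the linking of the shrunken resolved braid with its $\tau$-parallel copy, which equals $w_\tau(\zeta)$ by \eqref{eqn:wwb}. If you replace your directional pushoff by this second cone, your argument closes in a few lines, and the regime analysis and winding-number counts disappear entirely.
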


\noindent
Here $\tau$ defines a trivialization of the vertical tangent bundle of $N\to \gamma$ which then induces a trivialization of the normal bundle of each braid $\zeta_i$.

\begin{proof}[Proof of Lemma~\ref{lem:Qlocal}.]
We can make an admissible representative $S$ for $W$, see Definition~\ref{def:admrep}, whose intersection with $\{1-\epsilon\}\times N$ consists of $m_i$ parallel (with respect to $\tau$) copies of each $\zeta_i$, and which shrinks radially towards $\gamma$ as the $[-1,1]$ coordinate on $[-1,1]\times N$ goes down to $-1$. We can make another such admissible representative $S'$, disjoint from $S$, whose intersection with $\{1-\epsilon\}\times N$ is parallel to the first and which likewise shrinks radially towards $\gamma$. Then in equation \eqref{eqn:Q}, the intersection number term vanishes. The first linking number term in \eqref{eqn:Q} also vanishes, as it is a sum of linking numbers of braids in neighborhoods of the $\zeta_i$; for each $i$, the braid from $S$ and the braid from $S'$, with respect to $\tau$, are trivial and parallel, and thus have linking number zero. The second linking number term in \eqref{eqn:Q} is a linking number in a neighborhood of $\gamma$ and equals $w_\tau(\zeta)$. 
\end{proof}

\subsection{The structure of the braids}

To prove Proposition~\ref{prop:local}, let $\gamma$ be a simple Reeb orbit of $\lambda$, let $N$ be a tubular neighborhood of $\gamma$ as in Definition~\ref{def:wb}, and let $\tau$ be a trivialization of $\gamma^*\xi$. Let $\theta$ denote the rotation number $\theta_\tau(\gamma)\in\R$.

Suppose first that $\theta$ is irrational. Then the Reeb orbit $\gamma$ and all of its covers are nondegenerate. Consequently, when $\lambda'$ is sufficiently $C^2$ close to $\lambda$, there is a unique Reeb orbit $\gamma_0'$ for $\lambda'$ close (as a current) to $\gamma$, and for $n$ large the only possibility for the orbit set $\gamma'$ is that it is the singleton set $\gamma'=\{(\gamma_0',m)\}$. In this case Proposition~\ref{prop:local} holds because $w_\tau(\gamma')=0$ and the left hand side of \eqref{eqn:local} is zero.

The nontrivial case of Proposition~\ref{prop:local} is when the rotation number $\theta$ is rational. In this case we need to investigate the braids that can arise in $\gamma'$.  The idea in what follows is to first analyze the case where the rotation number is an integer, and then reduce the general case to this one by taking an appropriate cover of a neighborhood of $\gamma$.  

We start with the case where the rotation number is an integer.  Here there is a simple picture: each braid has just one strand, and the linking number of any two braids is given by the rotation number. More precisely:

\begin{lemma}
\label{lem:integerbraids}
With the above notation, suppose that the rotation number $\theta=a\in\Z$. Let $\lambda_n=f_n\lambda$ where $f_n\to 1$ in $C^2$. Then:
\begin{description}
\item{(a)}
For a fixed positive integer $d$, if $\{\alpha_n\}$ is a sequence where each $\alpha_n$ is a simple Reeb orbit for $\lambda_n$ in $N$ which is a braid with $d$ strands, with $\alpha_n$ converging as currents to $d\gamma$ as $n\to\infty$, then $d=1$, and in particular the writhe $w_\tau(\alpha_n)=0$ for $n$ large enough.
\item{(b)}
Given two sequences of simple Reeb orbits $\{\alpha_n\}$ and $\{\beta_n\}$ as in (a) with $\alpha_n\neq \beta_n$ for each $n$, if $n$ is sufficiently large, then the linking number $\ell_\tau(\alpha_n,\beta_n)=a$.
\end{description}
\end{lemma}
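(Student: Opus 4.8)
The plan is to analyze the Reeb flows of $\lambda$ and of the perturbations $\lambda_n$ through their Poincar\'e return maps to a disc fiber of the fibration $N\cong S^1\times D^2$, and to control the resulting braids using the principle that an \emph{integer} rotation number pins down all the relevant winding numbers. Concretely, fix a fiber $D=\{t_0\}\times D^2$ and let $\Psi_n:U\to D^2$ denote the first-return map of the Reeb flow of $\lambda_n$ on a neighborhood $U$ of $0=\gamma\cap D$, and similarly $\Psi$ for $\lambda$. Since $f_n\to 1$ in $C^2$, the Reeb vector fields converge in $C^1$, so $\Psi_n\to\Psi$ in $C^1$ on a fixed neighborhood of $0$; here $\Psi(0)=0$, and by hypothesis the linearization $D\Psi(0)$ has eigenvalues $e^{\pm 2\pi i a}=1$ with linearized flow of rotation number $a$. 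The key analytic input is that the winding number, measured in $\tau$, of any nonzero solution of the linearized Reeb flow along $\gamma$ lies in the interval $[\floor{\theta},\ceil{\theta}]$; since $\theta=a\in\Z$ this interval is the single point $a$, so every such winding number equals $a$.

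To prove (a), regard the simple orbit $\alpha_n$ as a single period-$d$ orbit of $\Psi_n$ that collapses to the fixed point $0$ as $n\to\infty$. Connectedness of $\alpha_n$ means this orbit is a genuine $d$-cycle, so its combinatorial rotation number is $p/d\bmod 1$ for some $p$ with $\gcd(p,d)=1$. The same winding pinning shows that, as the cycle collapses to $0$, its combinatorial rotation number converges to the rotation number of $\gamma$, namely $a\equiv 0\bmod 1$. Hence $p/d\in\Z$, which together with $\gcd(p,d)=1$ forces $d=1$. A one-strand braid has no self-crossings, so $w_\tau(\alpha_n)=0$, completing (a).

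For (b), recall that $\ell_\tau(\alpha_n,\beta_n)$ equals the winding number, measured in the trivialization $\tau$, of the difference $\alpha_n(t)-\beta_n(t)$ as the base coordinate $t$ traverses $S^1$ once (both orbits being one-strand braids by (a)). As $n\to\infty$ both orbits collapse onto $\gamma$, so the normalized difference vector converges to a nonzero solution of the linearized Reeb flow along $\gamma$, whose winding number is pinned to $a$ by the input above. Since $\ell_\tau(\alpha_n,\beta_n)\in\Z$ and converges to $a$, it equals $a$ for $n$ large. This is the advertised simple picture: in a framing $\tau_0$ with $\theta_{\tau_0}(\gamma)=0$ the two orbits do not wind around each other, and passing to $\tau$, which differs from $\tau_0$ by $a$ full twists, adds exactly $a$ to the linking number of two single strands.

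The main obstacle is making the winding pinning rigorous at the \emph{degenerate} orbit $\gamma$. One must show that as $\alpha_n,\beta_n$ collapse, their relative winding and the combinatorial rotation numbers of the $d$-cycles genuinely converge to the rotation number of $\gamma$, even though $D\Psi(0)$ may be a nontrivial parabolic shear rather than a rotation: a shear can advance angles by nearly a half turn, so one cannot simply bound the angular displacement away from $1/2$. The correct resolution is precisely that, for an integer rotation number, the winding interval $[\floor{\theta},\ceil{\theta}]$ degenerates to the single value $a$, leaving no room for a nonintegral rotation $p/d$; granting this, both parts follow from $C^1$-convergence of the return maps together with the integrality of linking numbers and writhes.
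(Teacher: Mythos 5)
Your ``key analytic input'' is false, and this is where the proposal breaks. For a linear symplectic path with rotation number $\theta=a\in\Z$, the winding of an individual nonzero solution is \emph{not} pinned to the exact value $a$: for the parabolic shear $\Phi(t)=\bigl(\begin{smallmatrix}1 & t\\ 0 & 1\end{smallmatrix}\bigr)$, whose rotation number is $0$, the solution through $(0,1)$ winds by a nonzero, non-integer amount lying in $(-1/4,0)$. The correct statement -- and the one the paper uses -- is that the winding of any individual solution differs from $\theta$ by strictly less than $1/2$; the interval $[\floor{\theta},\ceil{\theta}]$ does \emph{not} collapse to a point, even when $\theta$ is an integer. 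Your own closing paragraph identifies the shear as the main obstacle and then ``resolves'' it by exactly this false claim. For part (b) the damage is repairable, and the repaired argument is essentially the paper's: parametrize both orbits as graphs $\hat\alpha_n,\hat\beta_n$ over $\gamma$, use the fundamental theorem of calculus and $C^1$-convergence of the flows to show that the winding of $\hat\alpha_n-\hat\beta_n$ stays within $1/4$ of the winding of a linearized solution, which in turn is within $1/2$ of $a$; then the linking number is an integer within $3/4$ of $a$, hence equal to $a$. Note that integrality must be played off against the bound $3/4$; ``converges to $a$'' is not available.

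Part (a) contains the genuine gap. Your argument rests on the claim that a $d$-periodic orbit of a disc homeomorphism has combinatorial rotation number $p/d$ with $\gcd(p,d)=1$. That is Poincar\'e theory for \emph{circle} homeomorphisms and it fails for disc maps: any braid whose underlying permutation is a $d$-cycle is realized as a periodic orbit of some disc diffeomorphism (extend the braid isotopy to an ambient isotopy), for instance a $4$-periodic orbit whose strands wind a total of twice around the axis, giving rotation number $2/4$. Moreover, even granting a definition of this rotation number, your winding pinning cannot show it converges to $a$: the linearization argument controls the \emph{relative} winding of two Reeb trajectories of $\lambda_n$, whereas the braid's rotation number is measured around the central axis $S^1\times\{0\}$, which is generally \emph{not} an orbit of $\lambda_n$ (precisely because $P_\gamma$ may have eigenvalue $1$, $\gamma$ need not persist under perturbation), so there is no second trajectory to compare with. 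This step is exactly where the paper requires a serious input: Bangert's theorem on closed orbits of $C^1$-perturbed flows (Lemma~\ref{lemma_Bangert}, quoted from \cite{bangert}), which says that simple orbits of $\lambda_n$ converging as currents to $d\gamma$ with $d\ge 2$ force the eigenvalues of $P_\gamma$ to contain roots of unity of degree $d$ generating all $d$-th roots of unity; since an integer rotation number forces the eigenvalues of $P_\gamma$ to be real and positive (not necessarily equal to $1$ -- your assertion that they equal $e^{\pm 2\pi i a}=1$ overlooks the positive hyperbolic case), only $1$ can occur, ruling out $d\ge2$. Your elementary substitute does not replace this input, so part (a), and with it the lemma, remains unproved.
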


The above lemma is proved in \S\ref{sec:bangert} below. We now consider the case where the rotation number is a rational number $a/b$ that is not an integer.  Here there is a similarly nice picture: each new simple Reeb orbit that can appear can be treated for our purposes like an $(a,b)$ torus braid; see also Remark~\ref{rmk:torus}.  More precisely: 

\begin{lemma}
\label{lem:rationalbraids}
With the above notation, suppose that the rotation number is $\theta=a/b$ where $a,b$ are relatively prime integers with $b>1$. Let $\lambda_n=f_n\lambda$ where $f_n\to 1$ in $C^2$. Then:
\begin{description}
\item{(a)}
For $n$ sufficiently large, there is a unique simple Reeb orbit $\gamma_0'$ for $\lambda_n$ that is close to $\gamma$ as a current.
\item{(b)}
For a fixed integer $d>1$, if $\{\alpha_n\}$ is a sequence where each $\alpha_n$ is a simple Reeb orbit for $\lambda_n$ in $N$ which is a braid with $d$ strands, with $\alpha_n$ converging as currents to $d\gamma$ as $n\to\infty$, then $d=b$, the writhe $w_\tau(\alpha_n)=a(b-1)$, and the linking number $\ell_\tau(\gamma_0',\alpha_n)=a$.
\item{(c)}
Given two sequences of Reeb orbits $\{\alpha_n\}$ and $\{\beta_n\}$ as in (b) with $\alpha_n\neq \beta_n$ for each $n$, if $n$ is sufficiently large, then the linking number $\ell_\tau(\alpha_n,\beta_n)=ab$.
\end{description}
\end{lemma}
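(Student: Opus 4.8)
The plan is to reduce everything to the already-established integer case, Lemma~\ref{lem:integerbraids}, by passing to the $b$-fold cyclic cover of the neighborhood $N$. Writing $N\simeq S^1\times D^2$ as in Definition~\ref{def:wb}, let $p:\tilde N\to N$ be the $b$-fold cover that is cyclic in the $S^1$ factor and trivial on the $D^2$ fibers, so that the core circle $\tilde\gamma$ of $\tilde N$ maps to $\gamma$ with degree $b$, i.e.\ $p\circ\tilde\gamma=\gamma^b$. Then $p^*\lambda$ is a contact form on $\tilde N$ whose Reeb vector field lifts that of $\lambda$, with $\tilde\gamma$ a simple Reeb orbit, and the lifted trivialization $\tilde\tau=p^*\tau$ realizes rotation number $\theta_{\tilde\tau}(\tilde\gamma)=b\cdot\theta_\tau(\gamma)=a\in\Z$, since one period of $\tilde\gamma$ covers $b$ periods of $\gamma$. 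Moreover $p^*\lambda_n=(f_n\circ p)\,p^*\lambda$ is a sequence of contact forms whose conformal factors still converge to $1$ in $C^2$. Thus Lemma~\ref{lem:integerbraids} applies to $\tilde\gamma$ in $\tilde N$.

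For part (a), I would note that since $\gcd(a,b)=1$ and $b>1$, the rotation number $\theta_\tau(\gamma)=a/b$ is not an integer, so $1$ is not an eigenvalue of $P_\gamma$ and $\gamma$ is nondegenerate; the implicit function theorem then yields a unique simple Reeb orbit $\gamma_0'$ of $\lambda_n$ close to $\gamma$ as a current for $n$ large. For part (b), I would take a $d$-strand simple braid $\alpha_n$ converging to $d\gamma$ and analyze its preimage. Since $p$ is a fiberwise diffeomorphism, $p^{-1}(\alpha_n)$ is again a $d$-strand braid in $\tilde N$, and standard covering-space bookkeeping shows it has $\gcd(d,b)$ connected components, each a simple Reeb orbit of $p^*\lambda_n$ that is a braid with $d/\gcd(d,b)$ strands converging to $\tfrac{d}{\gcd(d,b)}\tilde\gamma$. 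Applying Lemma~\ref{lem:integerbraids}(a) to one component forces $d/\gcd(d,b)=1$, i.e.\ $d\mid b$. To exclude proper divisors, I would observe that if $1<d<b$ then $\gamma^d$ has rotation number $da/b\notin\Z$, hence is nondegenerate; consequently the $d$-th iterate of the first return map has an isolated nondegenerate fixed point at the core, which persists under $C^1$-small perturbation, so for $n$ large there can be no additional period-$d$ orbit near $\gamma$. Combining $d\mid b$ with the exclusion of $1<d<b$ gives $d=b$, and then $p^{-1}(\alpha_n)$ consists of exactly $b$ distinct $1$-strand simple Reeb orbits $\tilde\alpha_n^{(1)},\dots,\tilde\alpha_n^{(b)}$ close to $\tilde\gamma$.

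It then remains to transfer the writhe and linking computations through the cover. The key formula I would establish is that, because $p$ has degree $b$ and $\tau$ pulls back to $\tilde\tau$, disjoint knots $K_1,K_2$ in $N$ satisfy $\ell_{\tilde\tau}(p^{-1}K_1,p^{-1}K_2)=b\,\ell_\tau(K_1,K_2)$; this follows by pulling back a $\tau$-framed Seifert chain for $K_2$ and noting intersection numbers scale by the degree. Taking $K_1=\alpha_n$ and $K_2=\alpha_n^+$ a $\tau$-pushoff, and using $w_\tau(\alpha_n)=\ell_\tau(\alpha_n,\alpha_n^+)$, this gives
\[
w_\tau(\alpha_n)=\frac1b\left(\sum_i w_{\tilde\tau}(\tilde\alpha_n^{(i)})+\sum_{i\neq j}\ell_{\tilde\tau}(\tilde\alpha_n^{(i)},\tilde\alpha_n^{(j)})\right).
\]
By Lemma~\ref{lem:integerbraids}(a) each $w_{\tilde\tau}(\tilde\alpha_n^{(i)})=0$, and by Lemma~\ref{lem:integerbraids}(b) each of the $b(b-1)$ linking terms equals $a$, so $w_\tau(\alpha_n)=a(b-1)$. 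Since $p^{-1}(\gamma_0')$ is a single $1$-strand orbit $\tilde\gamma_0'$ close to $\tilde\gamma$ and distinct from each $\tilde\alpha_n^{(i)}$, the same transfer formula with Lemma~\ref{lem:integerbraids}(b) gives $\ell_\tau(\gamma_0',\alpha_n)=\frac1b\sum_i\ell_{\tilde\tau}(\tilde\gamma_0',\tilde\alpha_n^{(i)})=\frac1b\cdot b\cdot a=a$. For part (c), the preimages of $\alpha_n$ and $\beta_n$ yield $2b$ distinct $1$-strand orbits near $\tilde\gamma$, so $\ell_\tau(\alpha_n,\beta_n)=\frac1b\sum_{i,j}\ell_{\tilde\tau}(\tilde\alpha_n^{(i)},\tilde\beta_n^{(j)})=\frac1b\cdot b^2\cdot a=ab$, again by Lemma~\ref{lem:integerbraids}(b).

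The hard part will be the careful verification that passing to the cover genuinely reduces to the integer case with every topological quantity transferring correctly: one must confirm that $\tilde\tau$ realizes the integer rotation number $a$, count the components of $p^{-1}(\alpha_n)$ and their strand numbers precisely, and justify the degree-$b$ scaling of linking numbers, including the framing-correction terms implicit in the definitions of $\ell_\tau$ and $w_\tau$ from \cite[\S2.6]{ir}. By contrast, the number-theoretic step forcing $d=b$—combining $d\mid b$ from the cover with the nondegeneracy of $\gamma^d$ for $1<d<b$—is comparatively routine once the return-map persistence argument is in place.
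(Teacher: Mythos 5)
Your proposal is correct and takes essentially the same route as the paper: the paper also passes to the $b$-fold cyclic cover $\widetilde{N}$, applies Lemma~\ref{lem:integerbraids} to the lifted orbits (getting $d\le b$ from the one-strand conclusion and $d\ge b$ from nondegeneracy of $\gamma^d$ for $0<d<b$, where you instead get $d\mid b$ plus the same exclusion), and computes the writhe and linking numbers by matching each crossing downstairs with its $b$ preimage crossings upstairs. Your transfer formula $\ell_{\tilde\tau}(p^{-1}K_1,p^{-1}K_2)=b\,\ell_\tau(K_1,K_2)$, combined with the pushoff description of the writhe, is precisely the paper's deck-transformation crossing count in different packaging.
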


\begin{remark}
\label{rmk:torus}
In Lemma~\ref{lem:rationalbraids}(b), we expect that one can further show that if $n$ is sufficiently large then $\alpha_n$ is an $(a,b)$ torus braid around $\gamma_0'$; however we do not need this.
\end{remark}

\begin{proof}[Proof of Lemma~\ref{lem:rationalbraids}, assuming Lemma~\ref{lem:integerbraids}]
Part (a) holds because the Reeb orbit $\gamma$ is nondegenerate.

To prove part (b), we first note that by the same argument as for (a), we must have that $d\ge b$, because for $0<d<b$ the $d^{th}$ iterate of $\gamma$ has rotation number $da/b\notin\Z$.

Now let $\widetilde{N}$ denote the $b$-fold cyclic cover of the tubular neighborhood $N$, with the pullback of the contact form $\lambda_n$.
There is a unique simple Reeb orbit $\widetilde{\gamma_0'}$ in $\widetilde{N}$ whose projection to $N$ is a $b$-fold cover of $\gamma_0'$. In addition, by lifting the Reeb orbit $\alpha_n$ to a Reeb trajectory in $\widetilde{N}$ and extending it by the Reeb flow if needed, we obtain a simple Reeb orbit $\widetilde{\alpha_n}$ in $\widetilde{N}$ whose projection to $N$ is a cover of $\alpha_n$. By Lemma~\ref{lem:integerbraids}(a), if $n$ is sufficiently large, then $\widetilde{\alpha_n}$ is a braid with one strand in $\widetilde{N}$, hence $\alpha_n$ has at most $b$ strands. Thus $d = b$. By Lemma~\ref{lem:integerbraids}(b) we have $\ell_\tau(\widetilde{\gamma_0'},\widetilde{\alpha_n})=a$ in $\widetilde{N}$, and it follows that $\ell_\tau(\gamma_0',\alpha_n)=a$.

We now compute the writhe $w_\tau(\alpha_n)$. There are $b$ possibilities for the Reeb orbit $\widetilde{\alpha_n}$ in the previous paragraph, which we denote by $\eta_l$ for $l\in\Z/b$, ordered so that the $\Z/b$ action on $\widetilde{N}$ by deck transformations sends $\eta_l$ to $\eta_{l+1}$. The writhe $w_\tau(\alpha_n)$ is a signed count of crossings of two strands of $\alpha_n$. Each such crossing corresponds to a crossing of some $\eta_l$ with some $\eta_{l'}$ for $l\neq l'$, as well as crossings of $\eta_{l+p}$ with $\eta_{l'+p}$ for $p=1,\ldots,b-1$ obtained from the first crossing by deck transformations. On the other hand, the linking number of $\eta_l$ with $\eta_{l'}$ is one half the signed count of crossings of $\eta_l$ with $\eta_{l'}$. Thus we obtain
\[
\begin{split}
w_\tau(\alpha_n) &= \frac{1}{b}\sum_{l\neq l'}\ell_\tau(\eta_l,\eta_{l'})\\
&= \frac{1}{b}\cdot b(b-1) \cdot a\\
&= a(b-1).
\end{split}
\]
Here we are using Lemma~\ref{lem:integerbraids}(b) to get that $\ell_\tau(\eta_l,\eta_{l'})=a$ when $l\neq l'$.

We now prove (c). Similarly to the previous calculation, each crossing counted by the linking number $\ell_\tau(\alpha_n,\beta_n)$ corresponds to $b$ crossings of some lift of $\alpha_n$ (extended to a simple Reeb obit) with some lift of $\beta_n$ (extended to a simple Reeb orbit). Thus the linking number we want is $1/b$ times the sum of the linking number of each of the $b$ extended lifts of $\alpha_n$ with each of the $b$ extended lifts of $\beta_n$, which is $(1/b)\cdot b^2 \cdot a = ab$.
\end{proof}

\begin{proof}[Proof of Proposition~\ref{prop:local}]
As explained above, we can assume that $\theta=a/b$ where $a,b$ are relatively prime integers with $b>0$. When $a/b\notin\Z$, the orbit set $\gamma'$ consists of the orbit $\gamma_0'$ from Lemma~\ref{lem:rationalbraids}(a) with multiplicity $m_0$ for some $m_0\ge 0$, together with orbits $\gamma_k'$ for $k\neq 0$ with multiplicities $m_k>0$. When $a/b\in\Z$, the same is true except that we do not necessarily have a unique $\gamma_0'$ and we can take $m_0=0$. Since each $\gamma'_k$ for $k\neq 0$ is close to a $b$-fold cover of $\gamma$, we have
\begin{equation}
\label{eqn:m}
m_0+b\sum_{k\neq 0}m_k=m.
\end{equation}
By equation~\eqref{eqn:wwb} and Lemmas~\ref{lem:integerbraids} and \ref{lem:rationalbraids}, if $\lambda'$ is sufficiently $C^2$ close to $\lambda$ and if $\gamma'$ is sufficiently close to $m\gamma$ as a current, then we have
\begin{equation}
\label{eqn:comp1}
w_\tau(\gamma') = a(b-1)\sum_{k\neq 0}m_k^2 + 2am_0\sum_{k\neq 0}m_k + ab\sum_{0\neq k\neq k'\neq 0}m_km_k'.
\end{equation}

Now we consider Conley-Zehnder indices. By equation \eqref{eqn:CZI} we have
\begin{equation}
\label{eqn:useCZI}
\op{CZ}_{\tau'}^I(\gamma') = \sum_{l=1}^{m_0}\op{CZ}_{\tau'}\left((\gamma_0')^l\right) + \sum_{k\neq 0}\sum_{l=1}^{m_k}\op{CZ}_{\tau'}\left((\gamma'_k)^l\right).
\end{equation}
For a positive integer $l\le m$, if $\lambda'$ is sufficiently close to $\lambda$, then with respect to $\tau$, the Reeb orbit $(\gamma_0')^l$ has rotation number close to $(a/b)l$, and each Reeb orbit $(\gamma_k')^l$ for $k\neq 0$ has rotation number close to $al$. Then by \eqref{eqn:CZdef} and \eqref{eqn:useCZI} we get
\[
\left| \op{CZ}_{\tau'}^I(\gamma') - \sum_{l=1}^{m_0}\frac{2al}{b} - \sum_{k\neq0}\sum_{l=1}^{m_k}2al\right| \le m_0+\sum_{k\neq 0}m_k.
\]
It follows from this and \eqref{eqn:m} that
\begin{equation}
\label{eqn:comp2}
\left|\op{CZ}_{\tau'}^I(\gamma') - \frac{a}{b}(m_0^2+m_0) - a\sum_{k\neq 0}(m_k^2+m_k)\right|\le m.
\end{equation}
Finally, by \eqref{eqn:CZdef} we have
\[
\left|\sum_{l=1}^m\op{CZ}_\tau(\gamma^l) - \frac{a}{b}(m^2 + m)\right| \le m.
\]
Then by \eqref{eqn:m} we get
\begin{equation}
\label{eqn:comp3}
\left|\sum_{l=1}^m\op{CZ}_\tau(\gamma^l) - \frac{a}{b}(m_0^2+m_0) - a(2m_0+1)\sum_{k\neq 0}m_k - ab\left(\sum_{k\neq 0}m_k\right)^2\right|\le m.
\end{equation}
Combining \eqref{eqn:comp1}, \eqref{eqn:comp2}, and \eqref{eqn:comp3} gives the desired estimate \eqref{eqn:local}.
\end{proof}

\subsection{Perturbations of degenerate flows}
\label{sec:bangert}

To conclude the proof of Proposition~\ref{prop:local}, we now prove Lemma~\ref{lem:integerbraids}.

As in the statement of the lemma, let $\gamma$ be a simple Reeb orbit of $\lambda$ of period $T$, and let $\lambda_n = f_n \lambda$, where $f_n \to 1$ in $C^2$.  Let $\phi^t$ and $\phi_n^t$ denote the time $t$ flows of the Reeb vector fields for $\lambda$ and $\lambda_n$, respectively.    Let $p \in \gamma$, and let $P_{\gamma}: \xi_p \to \xi_p$ denote the linearized return map \eqref{eqn:Pgamma}.

\begin{lemma}
\label{lemma_Bangert}
Let $\{(p_n,T_n)\}_{n=1,\ldots}$ be a sequence in $Y \times (0,\infty)$ satisfying:
\begin{itemize}
\item[(c1)] $\phi_n^{T_n}(p_n) = p_n \to p$.
\item[(c2)] $\phi_n^{T_n/j}(p_n)\neq p_n$ for all integers $j\geq 2$ and all $n$.
\item[(c3)] $T_n \to T_\infty \in [0,\infty)$.
\end{itemize}
Then one of the following alternatives holds:
\begin{itemize}
\item[(a1)] $T_\infty = T$.
\item[(a2)] $T_\infty = Td$ for some integer $d\geq2$, and the eigenvalues of $P_{\gamma}$ that are roots of unity of degree $d$ generate multiplicatively all roots of unity of order $d$.
\end{itemize}
\end{lemma}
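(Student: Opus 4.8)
The plan is to pass to a transverse return map, extract a first-order (blow-up) constraint on $P_\gamma$ from the collapsing orbits, and then use the primitivity hypothesis (c2) — through a cyclic cover — to upgrade that constraint to the generation statement in (a2).

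\emph{Setup and reduction to the return map.} First I would check $T_\infty>0$. Since the Reeb field $R$ is nonvanishing, there is a flow box around $p$ in which no trajectory returns in time less than some $\epsilon_0>0$; as $f_n\to 1$ in $C^2$ the fields $R_n$ converge to $R$ in $C^1$, so the same holds for $\phi_n$ for $n$ large, and since $p_n\to p$ and (c2) makes $T_n$ the minimal period, we get $T_n\ge\epsilon_0$, hence $T_\infty>0$. By continuous dependence of flows on parameters, $\phi^{T_\infty}(p)=\lim_n\phi_n^{T_n}(p_n)=p$, and since $\gamma$ has minimal period $T$ this forces $T_\infty=Td$ for an integer $d\ge1$. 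If $d=1$ we are in case (a1), so assume $d\ge2$. Picking a disk $D$ transverse to $\gamma$ at $p$ with return map $\psi$ (so $\psi(p)=p$ and $d\psi_p=P_\gamma=:A$), and letting $\psi_n$ be the return map of $\phi_n$, the orbit through $p_n$ meets $D$ in exactly $d$ points $q_n^{(0)},\dots,q_n^{(d-1)}$ cyclically permuted by $\psi_n$; by (c2) this is a \emph{primitive} period-$d$ orbit of $\psi_n$, and all $d$ points converge to $p$.

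\emph{First-order constraint via blow-up.} Set $\delta_n^{(k)}=q_n^{(k+1)}-q_n^{(k)}$ (indices mod $d$) in local coordinates. Telescoping gives $\sum_k\delta_n^{(k)}=0$, while the fundamental theorem of calculus together with the $C^1$-convergence $\psi_n\to\psi$ and $q_n^{(k)}\to p$ yields $\delta_n^{(k)}=(A+o(1))\,\delta_n^{(k-1)}$. Rescaling by $\epsilon_n=\max_k|\delta_n^{(k)}|>0$ (positive since the $d$ points are distinct) and passing to a subsequence, the limits $e^{(k)}$ satisfy $e^{(k)}=A^ke^{(0)}$, $\max_k|e^{(k)}|=1$, and $\sum_{k=0}^{d-1}A^ke^{(0)}=0$ with $e^{(0)}\ne 0$. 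Hence the real matrix $\sum_{k=0}^{d-1}A^k$ is singular; computing its determinant from the eigenvalues $\mu,\mu^{-1}$ of the symplectic matrix $A$ (including the repeated-eigenvalue cases) shows this occurs precisely when $A$ has an eigenvalue $\mu\ne 1$ with $\mu^d=1$. I would record this implication as a self-contained sublemma, since it will be reapplied on a cover.

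\emph{Pinning the order, and the main obstacle.} Let $d'\mid d$ be the order of $\mu$, so $2\le d'\le d$; the task is to show $d'=d$, for then $\mu$ is a primitive $d$-th root of unity and $\langle\mu,\mu^{-1}\rangle=\mu_d$, which is exactly (a2). The essential difficulty is that the first-order constraint is necessary but \emph{not} sufficient: when $A$ has eigenvalue $-1$, for instance, $\sum_{k<d}A^k$ is singular for every even $d$, so the blow-up alone cannot rule out $d>d'$, and the primitivity hypothesis must be used more deeply. I would do this by passing to the $d'$-fold cyclic cover $\widetilde N$ of a tubular neighborhood of $\gamma$: there $\gamma$ lifts to a simple orbit $\tilde\gamma$ whose linearized return map is $\tilde A=A^{d'}$, which is \emph{unipotent} since $\mu^{\pm d'}=1$. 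Lifting the collapsing orbits and extending by the flow produces primitive period-$(d/d')$ orbits of the lifted return map $\tilde\psi_n$ collapsing onto $\tilde\gamma$. If $d/d'\ge 2$, the sublemma applied to $\tilde\psi_n$ forces $\sum_{k=0}^{d/d'-1}\tilde A^k$ to be singular; but for a unipotent $2\times 2$ matrix this determinant equals $(d/d')^2\ne 0$, a contradiction. Hence $d/d'=1$, i.e.\ $d'=d$, completing the proof. The main obstacle is precisely this final step: recognizing that the infinitesimal information is insufficient and packaging the primitivity hypothesis through the cyclic cover so that the same blow-up argument yields the sharp generation statement.
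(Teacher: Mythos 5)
Your proof is correct, but it takes a genuinely different route from the paper: the paper does not argue this lemma at all, instead quoting it as a special case of a result of Bangert \cite{bangert} on $C^1$ flows, whereas you give a self-contained proof. Your three steps all check out: (i) the flow-box bound forcing $T_\infty=Td$ with $d\ge 1$; (ii) the telescoping/blow-up argument showing that a collapsing primitive $d$-cycle of the return maps forces $\sum_{k=0}^{d-1}P_\gamma^k$ to be singular, hence an eigenvalue $\mu\neq 1$ with $\mu^d=1$ (your determinant analysis, including the repeated-eigenvalue cases $\mu=\pm 1$, is right); and (iii) the passage to the $d'$-fold cyclic cover, where the linearized return map $P_\gamma^{d'}$ is unipotent, so $\det\sum_{k=0}^{m-1}\bigl(P_\gamma^{d'}\bigr)^k=m^2\neq 0$ rules out $m=d/d'\ge 2$. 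You also correctly isolated the real subtlety: the first-order constraint alone cannot distinguish $d$ from a proper divisor $d'$ (e.g.\ $\mu=-1$ with $d$ even), which is exactly why the generation statement in (a2), rather than mere existence of a root-of-unity eigenvalue, needs the extra covering/iteration step. Comparing the two approaches: the paper's citation is shorter and invokes a statement valid in any dimension, where ``generation'' genuinely concerns the subgroup generated by several eigenvalues; your argument exploits the $2\times 2$ symplectic structure (the eigenvalues are $\mu^{\pm1}$, so the relevant subgroup is cyclic and $P_\gamma^{d'}$ is unipotent), and it meshes naturally with the cyclic-cover technique the paper itself uses to prove Lemma~\ref{lem:rationalbraids}, making the whole of \S\ref{sec:bangert} independent of outside references. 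Two cosmetic remarks: $d\psi_p$ is only conjugate to $P_\gamma$ (harmless, since every statement you use is conjugation-invariant), and since $d'\mid d$ the lifted orbits close up after one traversal, so no ``extending by the flow'' is actually needed; alternatively the cover can be avoided entirely by applying your sublemma to the iterates $\psi_n^{d'}$ acting on the sub-cycle $\{q_n^{(jd')}\}_{j}$.
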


\begin{proof}
This is a special case of a result of Bangert \cite[Prop.\ 1]{bangert} for $C^1$ flows.
\end{proof}

In the situation of Lemma~\ref{lem:integerbraids}, more can be said:

\begin{corollary}
\label{cor:Bangert}
Suppose that the eigenvalues of $P_\gamma$ are real and positive. Let $\{(p_n,T_n)\}$ be a sequence satisfying conditions (c1), (c2), and (c3) of Lemma~\ref{lemma_Bangert}. Then alternative (a2) does not hold.
\end{corollary}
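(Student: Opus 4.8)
The plan is to argue by contradiction, assuming that alternative (a2) holds and deriving an incompatibility with the hypothesis that the eigenvalues of $P_\gamma$ are real and positive. The starting observation is that $P_\gamma$ is a symplectic automorphism of the two-dimensional symplectic vector space $(\xi_{\gamma(0)}, d\lambda)$, so it has determinant $1$ and its two eigenvalues are of the form $\mu$ and $\mu^{-1}$. By hypothesis both are real and positive.

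Next I would record the elementary fact that the only root of unity which is a positive real number is $1$ itself, and that $1$ is a primitive first root of unity. Consequently, among the eigenvalues $\mu, \mu^{-1}$ of $P_\gamma$, the only one that can possibly be a root of unity is $1$ (occurring precisely when $\mu = 1$), and it has order $1$, not order $d$ for any $d \ge 2$.

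Now I would unwind what (a2) demands. It asserts that $T_\infty = Td$ for some integer $d \ge 2$, and that those eigenvalues of $P_\gamma$ which are roots of unity of degree $d$ multiplicatively generate the cyclic group $\mu_d$ of all $d$-th roots of unity. By the previous paragraph, the set of eigenvalues of $P_\gamma$ that are primitive $d$-th roots of unity is empty for every $d \ge 2$, since the only candidate $1$ has order $1$. An empty set generates the trivial subgroup $\{1\}$, which is strictly smaller than $\mu_d$ whenever $d \ge 2$. This contradicts the generation condition in (a2), so (a2) cannot hold.

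There is no genuine analytic obstacle here: the dynamical content lives entirely in Lemma~\ref{lemma_Bangert} (hence in Bangert's theorem), and the corollary is a purely algebraic consequence of specializing the eigenvalue hypothesis. The only point that warrants a little care is the precise reading of the phrase ``roots of unity of degree $d$'' in alternative (a2); but under either natural interpretation — primitive $d$-th roots, or all $d$-th roots with $\zeta^d=1$ — the positive real eigenvalues contribute at most the single element $1$, whose generated subgroup is trivial and therefore cannot exhaust the nontrivial group $\mu_d$ for $d \ge 2$, so the conclusion is unaffected.
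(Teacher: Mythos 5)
Your proposal is correct and follows essentially the same argument as the paper: since the eigenvalues of $P_\gamma$ are real and positive, the only root of unity among them can be $1$, so they cannot multiplicatively generate the group of roots of unity of order $d$ for any $d \ge 2$, ruling out alternative (a2). The extra care you take with the symplectic normalization $\mu, \mu^{-1}$ and the reading of ``roots of unity of degree $d$'' is fine but not needed beyond what the paper's one-line proof already contains.
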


\begin{proof}
The only root of unity that can be an eigenvalue of $P_\gamma$ is $1$, hence the set of eigenvalues of $P_{\gamma}$ does not generate multiplicatively the group of roots of unity of order $d$ when $d\geq 2$.
\end{proof}

\begin{proof}[Proof of Lemma~\ref{lem:integerbraids}]
Part (a) follows from Corollary~\ref{cor:Bangert}.

To prove part (b), fix a diffeomorphism $\Phi$ from the tubular neighborhood $N$ of $\gamma$ to $(\R/T\Z)\times \C$ such that $\gamma$ corresponds to $(\R/T\Z)\times \{0\}$, the Reeb vector field $R_n$ of $\lambda_n$ is transverse to the $\C$ fibers for $n$ sufficiently large (assume that $n$ is this large below), and the derivative of $\Phi$ in the normal direction along $\gamma$ agrees with the trivialization $\tau$. We omit the diffeomorphism $\Phi$ from the notation below and denote points in $N$ using the coordinates $(t,z)\in(\R/T\Z)\times\C$.

By part (a), by taking $n$ large enough we can assume that $\alpha_n$ and $\beta_n$ have the same period as $\gamma$.  After reparametrization, the Reeb orbit $\alpha_n$ is given by a map
\[
\begin{split}
\R/T\Z &\longrightarrow (\R/T\Z)\times\C,\\
t & \longmapsto (t,\hat{\alpha}_n(t))
\end{split}
\]
where $\hat{\alpha}_n:\R/T\Z\to\C$. Likewise the Reeb orbit $\beta_n$ is described by a map $\hat{\beta}_n:\R/T\Z\to\C$. We have
\begin{equation}
\label{eqn:wind}
\ell_\tau(\alpha_n,\beta_n) = \op{wind}(\hat{\alpha}_n-\hat{\beta}_n),
\end{equation}
where the right hand side denotes the winding number of the loop
\[
\hat{\alpha}_n-\hat{\beta}_n:\R/T\Z\longrightarrow\C^*.
\]

We now compute the right hand side of \eqref{eqn:wind}. There is a convex neighborhood $U$ of $0$ in $\C$ such that if $n$ is sufficiently large (which we assume below), then the following two conditions hold: First, $\hat{\alpha}_n(0),\hat{\beta}_n(0)\in U$. Second, for each $t\in[0,T]$ there is a well-defined map $\psi_n^t:U\to\C$ such that for $z\in U$, the flow of the Reeb vector field $R_n$ starting at $(0,z)$ first hits $\{t\}\times\C$ at the point $(t,\psi_n^t(z))$. In particular, it follows from the definition that
\begin{equation}
\label{eqn:hats}
\begin{split}
\hat{\alpha}_n(t) &= \psi_n^t(\hat{\alpha}_n(0)),\\
\hat{\beta}_n(t) &= \psi_n^t(\hat{\beta}_n(0)).
\end{split}
\end{equation}

Now consider the derivative of $\psi_n^t$, which we denote by
\[
D\psi_n^t:U\times\C\longrightarrow \C.
\]
By \eqref{eqn:hats}, we may apply the fundamental theorem of calculus to the function 
\[
s \to \psi_n^t\left(s \hat{\alpha}_n(0) + (1-s) \hat{\beta}_n(0)\right)
\]
 to obtain
\begin{equation}
\label{eqn:ftc}
\hat{\alpha}_n(t) - \hat{\beta}_n(t) = \int_0^1 D\psi_n^t\left(s\hat{\alpha}_n(0) + (1-s)\hat{\beta}_n(0), \hat{\alpha}_n(0)-\hat{\beta}_n(0)\right) ds.
\end{equation}
By the convergence of $\lambda_n$, if $n$ is sufficiently large (which we assume below), then the amount that $D\psi^t_n(s \hat{\alpha}_n(0) + (1-s) \hat{\beta}_n(0), \cdot)$ rotates any vector as compared to $D \psi_n^t(0, \cdot)$ can be made arbitrarily small.  It follows that the integrand in \eqref{eqn:ftc}, and hence $\hat{\alpha}_n(t) - \hat{\beta}_n(t)$, has positive inner product with $D \psi_n^t(0, \hat{\alpha}_n(0) - \hat{\beta}_n(0)).$ Thus, the right hand side of \eqref{eqn:wind} differs by less than $1/4$ from the rotation number (the change in argument divided by $2\pi$) of the path
\begin{equation}
\label{eqn:path}
\begin{split}
[0,T] &\longrightarrow \C^*,\\
t &\longmapsto D\psi_n^t\left(0,\hat{\alpha}_n(0)-\hat{\beta}_n(0)\right).
\end{split}
\end{equation}
The rotation number of the linearized Reeb flow along $\gamma$ differs from the rotation number of any individual vector by less than $1/2$. Hence, by again applying convergence of the $\lambda_n$ as above, if $n$ is sufficiently large then the rotation number of the path \eqref{eqn:path} differs by less than $1/2$ from $a$.  Since the right hand side of \eqref{eqn:wind} is an integer which differs by less than $3/4$ from $a$, it must equal $a$.
\end{proof}


\section{Two simple Reeb orbits implies nondegenerate}

We now prove Theorem~\ref{thm:nondeg}. Throughout this section, assume that $Y$ is a closed connected three-manifold, and $\lambda$ is a contact form on $Y$ with exactly two simple Reeb orbits, $\gamma_1$ and $\gamma_2$, of periods $T_1$ and $T_2$ respectively.

\subsection{The homology classes of the Reeb orbits}
\label{sec:hom}

\begin{lemma}
\label{lem:torsion}
The classes $[\gamma_i] \in H_1(Y)$ and $c_1(\xi)\in H^2(Y;\Z)$ are torsion.
\end{lemma}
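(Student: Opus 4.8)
The plan is to use the Volume Property (Proposition~\ref{prop:spectral}(c)) to show that the spectral invariants along a $U$-sequence grow like $\sqrt{k}$, and to extract torsion-ness of the relevant homology classes from the combinatorics of the only possible orbit sets. Since there are exactly two simple Reeb orbits $\gamma_1,\gamma_2$, every orbit set is of the form $\{(\gamma_1,m_1),(\gamma_2,m_2)\}$ for nonnegative integers $m_1,m_2$, so $[\alpha] = m_1[\gamma_1]+m_2[\gamma_2]$. The symplectic action of such an orbit set is $m_1T_1+m_2T_2$. This rigid structure is what makes the argument work.

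\textbf{Setting up the $U$-sequence.} First I would choose a class $\Gamma\in H_1(Y)$ for which $c_1(\xi)+2\op{PD}(\Gamma)$ is torsion, so that Proposition~\ref{prop:Useq} provides a $U$-sequence $\{\sigma_k\}$ for $\Gamma$, and Proposition~\ref{prop:spectral}(c) applies. The natural candidate is $\Gamma=0$, but one must check that $c_1(\xi)$ is itself torsion; in fact I expect to prove the torsion-ness of $c_1(\xi)$ and of $[\gamma_i]$ simultaneously, since they are linked through the ECH index computation. By Proposition~\ref{prop:spectral}(a), each $c_{\sigma_k}(Y,\lambda)=\mc{A}(\alpha^{(k)})$ for some orbit set $\alpha^{(k)}$ with $[\alpha^{(k)}]=\Gamma$, and by the above this means $\alpha^{(k)}=\{(\gamma_1,m_1^{(k)}),(\gamma_2,m_2^{(k)})\}$ with $m_1^{(k)}[\gamma_1]+m_2^{(k)}[\gamma_2]=\Gamma$ fixed, independent of $k$.

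\textbf{The growth argument.} By Proposition~\ref{prop:spectral}(b), the strict inequality $c_{U\sigma_{k+1}}<c_{\sigma_{k+1}}$ holds (using that there are only finitely many simple Reeb orbits), so the sequence $c_{\sigma_k}$ is strictly increasing; combined with the Volume Property, $c_{\sigma_k}\sim\sqrt{2\op{vol}(Y,\lambda)\,k}$. Thus the actions $m_1^{(k)}T_1+m_2^{(k)}T_2$ grow without bound and at rate $\sqrt{k}$, forcing $m_1^{(k)}+m_2^{(k)}\to\infty$. Now here is the key point: all the classes $[\alpha^{(k)}]$ are equal (to $\Gamma$), yet the multiplicities go to infinity. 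If $[\gamma_i]$ were not torsion, the group $H_1(Y)/(\text{torsion})$ would be a nontrivial free abelian group, and the condition $m_1^{(k)}[\gamma_1]+m_2^{(k)}[\gamma_2]=\Gamma$ with unboundedly growing multiplicities would force a nontrivial linear dependence $m_1[\gamma_1]+m_2[\gamma_2]=0$ in the free part with $(m_1,m_2)\neq 0$ of one sign; I would extract this and then iterate, using that one can add any multiple of such a relation to an orbit set without changing its homology class, to produce orbit sets of bounded action but unbounded ECH index, contradicting the $\sqrt{k}$ growth rate established above.

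\textbf{Where the main obstacle lies.} The delicate step is controlling the \emph{ECH index} (not just the action) of these orbit sets: the $U$-sequence classes $\sigma_k$ live in gradings that increase by $2$ under each $U$-map, so $I(\alpha^{(k)})$ must grow linearly in $k$, while the action grows only like $\sqrt{k}$. I expect to combine the ECH index formula \eqref{eqn:I} with the behavior of the Conley--Zehnder terms \eqref{eqn:CZI} under taking high iterates: for a fixed simple orbit, $\op{CZ}_\tau(\gamma^l)$ grows linearly in $l$ (roughly like $2l\,\theta_\tau(\gamma)$), so $\op{CZ}^I_\tau$ of an orbit set grows \emph{quadratically} in the multiplicities, as does $Q_\tau$. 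Reconciling this quadratic index growth with the constraint that index grows linearly in $k$ while action grows like $\sqrt k$ is exactly what pins down the torsion condition: non-torsion $[\gamma_i]$ would permit orbit sets with small action but large multiplicities (via the homological relation above), and hence large index, violating the linear-in-$k$ index growth forced by the $U$-sequence. Making this quantitative—tracking $c_\tau$, $Q_\tau$, and the CZ sums carefully enough to derive the contradiction—is the technical heart, but it is a finite, essentially combinatorial computation once the $\sqrt{k}$ growth and the rigidity of orbit sets are in hand.
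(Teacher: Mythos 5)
Your setup follows the paper exactly up to a point: choose a $U$-sequence via Proposition~\ref{prop:Useq}, write $c_{\sigma_k}=m_{1,k}T_1+m_{2,k}T_2$ with $m_{1,k}[\gamma_1]+m_{2,k}[\gamma_2]=\Gamma$ by Proposition~\ref{prop:spectral}(a), use (b) for strict monotonicity and (c) for the $\sqrt{k}$ growth, and conclude that the kernel of $(m_1,m_2)\mapsto m_1[\gamma_1]+m_2[\gamma_2]$ is nontrivial. But there are two genuine gaps. The first is the class $\Gamma$ itself: Proposition~\ref{prop:Useq} requires $c_1(\xi)+2\op{PD}(\Gamma)$ to be torsion, you correctly note that $\Gamma=0$ is not available without already knowing $c_1(\xi)$ is torsion, and the ``simultaneous'' proof you promise is circular as stated --- you never exhibit any admissible $\Gamma$, so the argument cannot start. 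The paper's fix is a topological fact you are missing: every closed oriented three-manifold is spin, so $c_1(\xi)\equiv w_2(TY)=0 \pmod 2$, hence there exists $\Gamma$ with $c_1(\xi)+2\op{PD}(\Gamma)=0$ exactly. This both licenses Proposition~\ref{prop:Useq} and delivers the second assertion of the lemma at the end: once the $[\gamma_i]$ are torsion, $\Gamma=m_{1,k}[\gamma_1]+m_{2,k}[\gamma_2]$ is torsion, whence $c_1(\xi)=-2\op{PD}(\Gamma)$ is torsion.

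The second gap is the contradiction mechanism. You propose to add multiples of a relation $w=(w_1,w_2)$ with $w_1[\gamma_1]+w_2[\gamma_2]=0$ to orbit sets, producing ``orbit sets of bounded action but unbounded ECH index,'' and to contradict the linear-in-$k$ index growth of the $U$-sequence. This fails for three reasons. (i) The constraint $I\approx C+2k$ applies only to orbit sets that actually represent the classes $\sigma_k$; arbitrary orbit sets with small action and large index need not be cycles, let alone representatives, so their existence contradicts nothing. (ii) The construction itself does not produce such orbit sets in general: adding $jw$ changes the action by $j(w_1T_1+w_2T_2)$, so the action stays bounded only if $w_1T_1+w_2T_2=0$ (which you cannot assume --- irrationality of $T_1/T_2$ is not available at this stage), and nonnegativity of multiplicities caps $|j|$ when $w$ has mixed signs, which is the typical case for a relation obtained from differences. (iii) The quadratic index formula you invoke is only defined for nullhomologous orbit sets, i.e.\ it presupposes the torsion conclusion; for non-torsion classes only a relative index is defined. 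The paper needs none of this and avoids the ECH index (and Proposition~\ref{prop:perturb}) entirely: if the kernel had rank $\le 1$, all pairs $(m_{1,k},m_{2,k})$ would lie on a single affine line, so at most $O(L)$ distinct actions are available below any level $L$; strict monotonicity of the $c_k$ then forces $c_k$ to grow at least linearly in $k$, contradicting the sublinear growth $c_k\sim\sqrt{2\op{vol}(Y,\lambda)\,k}$ from the Volume Property. Hence the kernel has rank $2$, i.e.\ both $[\gamma_i]$ are torsion.
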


\begin{proof}
We use a similar argument to the proof of \cite[Thm. 1.7]{twoinf}.

Since every oriented three-manifold is spin, we can choose $\Gamma\in H_1(Y)$ such that $c_1(\xi)+ 2\op{PD}(\Gamma)=0\in H^2(Y;\Z)$. By Proposition~\ref{prop:Useq}, there exists a $U$-sequence $\{\sigma_k\}_{\ge 1}$ for $\Gamma$. Write $c_k = c_{\sigma_k}(Y,\lambda)\in\R$.

By Proposition~\ref{prop:spectral}(a), we have
\[
c_k = m_{1,k}T_1 + m_{2,k}T_2
\]
for some nonnegative integers $m_{1,k}$ and $m_{2,k}$, and furthermore
\begin{equation}
\label{eqn:repg}
m_{1,k}[\gamma_1] + m_{2,k}[\gamma_2] = \Gamma \in H_1(Y).
\end{equation}

By Proposition~\ref{prop:spectral}(b), the sequence $\{c_k\}$ is strictly increasing. It then follows from \eqref{eqn:repg} that there are infinitely many integral linear combinations of $[\gamma_1]$ and $[\gamma_2]$ that have the same value in $H_1(Y)$. Thus the kernel of the map
\begin{equation}
\label{eqn:mhom}
\begin{split}
\Z^2 &\longrightarrow H_1(Y),\\
(m_1,m_2) &\longmapsto m_1[\gamma_1]+m_2[\gamma_2]
\end{split}
\end{equation}
has rank at least $1$.

In fact, the kernel of the map \eqref{eqn:mhom} must have rank at least $2$; otherwise $c_k$ would grow at least linearly in $k$, contradicting the sublinear growth in the Volume Property in Proposition~\ref{prop:spectral}(c). It follows that $[\gamma_1]$ and $[\gamma_2]$ are torsion. Since $c_1(\xi)+2\op{PD}(\Gamma)=0$, we deduce that $c_1(\xi)$ is also torsion.
\end{proof}

\subsection{Computing the ECH index}
\label{sec:cei}

If $m_1,m_2$ are nonnegative integers, we use the notation $\gamma_1^{m_1}\gamma_2^{m_2}$ to indicate the orbit set $\{(\gamma_1,m_1),(\gamma_2,m_2)\}$, with the element $(\gamma_i,m_i)$ omitted when $m_i=0$. Write $\alpha=\gamma_1^{m_1}\gamma_2^{m_2}$. If $[\alpha] =0$, then it follows from Remark~\ref{rem:absolute} and Lemma~\ref{lem:torsion} that $I(\alpha)\in\Z$ is defined. We now give an explicit computation of $I(\alpha)$, following \cite[\S4.7]{wh}.

\begin{definition}
\label{def:rln}
Define the {\em linking number\/}
\begin{equation}
\label{eqn:rln}
\ell(\gamma_1,\gamma_2) \eqdef \frac{\ell(\gamma_1^{l_1},\gamma_2^{l_2})}{l_1l_2} \in \Q
\end{equation}
where $l_1$ and $l_2$ are positive integers such that $l_i[\gamma_i]=0\in H_1(Y)$, and on the right hand side $\ell$ denotes the usual integer-valued linking number of disjoint nullhomologous loops.
\end{definition}

\begin{definition}
\label{def:srn}
For $i=1,2$, define the {\em Seifert rotation number\/} $\phi_i\in\R$ as follows. Let $\tau$ be a trivialization of $\xi$ over $\gamma_i$. Let $\theta_{i,\tau}=\theta_\tau(\gamma_i)\in\R$ denote the  rotation number of $\gamma_i$ with respect to $\tau$. Let $l_i$ be a positive integer such that $l_i[\gamma_i]=0$. Define
\begin{equation}
\label{eqn:defqitau}
Q_{i,\tau} \eqdef \frac{Q_\tau(\gamma_i^{l_i})}{l_i^2} \in\Q,
\end{equation}
where $Q_\tau(\gamma_i^{l_i})$ is shorthand for $Q_\tau(\gamma_i^{l_i},\emptyset,Z)$ for any $Z\in H_2(Y,\gamma_i^{l_i},\emptyset)$. Note that $Q_{i,\tau}$ does not depend on $Z$ by \eqref{eqn:Qamb}, and it does not depend on $l_i$ either because $Q_\tau$ is quadratic in the relative homology class. Finally, define
\begin{equation}
\label{eqn:phii}
\phi_i \eqdef Q_{i,\tau} + \theta_{i,\tau} \in \R.
\end{equation}
The number $\phi_i$ does not depend on the choice of trivialization $\tau$, by the change of trivialization formulas in \cite[\S2]{ir}.
\end{definition}

\begin{remark}
\label{rmk:seifertframing}
When $\gamma_i$ is nullhomologous, one can alternately describe $\phi_i$ as follows.  Let $\Sigma$ be a Seifert surface spanned by $\gamma_i$. There is a distinguished homotopy class of trivialization $\tau'$ of $\xi$ over $\gamma_i$, the ``Seifert framing'', for which the normal vector to $\Sigma$ has winding number zero around $\gamma_i$. We have $Q_{i,\tau'}=0$ by \cite[Lem.\ 3.10]{ir}.  It then follows that $\phi_i =  \theta_{\tau'}(\gamma_i)$,  In the general case when $\gamma_i$ is rationally nullhomologous, one can similarly describe $\phi_i$ as the rotation number with respect to a rational framing of $\gamma_i$ determined by a rational Seifert surface.
\end{remark}

\begin{lemma}
\label{lem:cei}
If $m_1[\gamma_1]+m_2[\gamma_2]=0 \in H_1(Y)$, then
\begin{equation}
\label{eqn:Iapprox}
I(\gamma_1^{m_1}\gamma_2^{m_2}) = \phi_1m_1^2 + \phi_2m_2^2 + 2\ell(\gamma_1,\gamma_2) m_1m_2 + O(m_1+m_2).
\end{equation}
\end{lemma}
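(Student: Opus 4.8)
The plan is to compute $I(\gamma_1^{m_1}\gamma_2^{m_2})$ directly from the definition \eqref{eqn:I}, tracking which terms contribute to leading order (quadratic in $m_1,m_2$) and which are absorbed into the error $O(m_1+m_2)$. Fix a trivialization $\tau$ of $\xi$ over $\gamma_1$ and $\gamma_2$, and a relative homology class $Z\in H_2(Y,\gamma_1^{m_1}\gamma_2^{m_2},\emptyset)$. Recall
\[
I(\gamma_1^{m_1}\gamma_2^{m_2}) = c_\tau(Z) + Q_\tau(Z) + \op{CZ}_\tau^I(\gamma_1^{m_1}\gamma_2^{m_2}),
\]
so I would analyze each of the three terms separately. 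The relative first Chern class $c_\tau$ is linear in the multiplicities (it is a count of zeros of a section, additive under taking unions and multiples of the boundary orbits), so $c_\tau(Z) = O(m_1+m_2)$ and contributes only to the error term.

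Next I would handle the Conley-Zehnder term. By \eqref{eqn:CZdef} we have $\op{CZ}_\tau(\gamma_i^l)=\floor{\theta_\tau(\gamma_i^l)}+\ceil{\theta_\tau(\gamma_i^l)}$, and since the rotation number is additive under iteration, $\theta_\tau(\gamma_i^l)=l\,\theta_{i,\tau}$. Hence $\op{CZ}_\tau(\gamma_i^l)$ differs from $2l\,\theta_{i,\tau}$ by at most $1$, and summing over $l=1,\dots,m_i$ gives
\[
\op{CZ}_\tau^I(\gamma_1^{m_1}\gamma_2^{m_2}) = \sum_{i=1}^2\sum_{l=1}^{m_i} 2l\,\theta_{i,\tau} + O(m_1+m_2) = \theta_{1,\tau}m_1^2 + \theta_{2,\tau}m_2^2 + O(m_1+m_2),
\]
using $\sum_{l=1}^{m_i} l = m_i^2/2 + O(m_i)$. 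This isolates the $\theta_{i,\tau}$ part of the eventual $\phi_i$ coefficients.

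The heart of the computation is the relative intersection term $Q_\tau(Z)$, where I expect the main obstacle to lie, since $Q_\tau$ is a quadratic form in the relative homology class and must produce the cross term $2\ell(\gamma_1,\gamma_2)m_1m_2$ as well as the $Q_{i,\tau}$ contributions to $\phi_i$. Here I would expand $Q_\tau$ using its quadratic/bilinear structure over the splitting of the orbit set into its $\gamma_1$ and $\gamma_2$ parts: the diagonal contributions are $Q_\tau(\gamma_i^{m_i})$, and by the definition \eqref{eqn:defqitau} and the fact that $Q_\tau$ scales quadratically in the homology class, $Q_\tau(\gamma_i^{m_i}) = Q_{i,\tau}m_i^2 + O(m_i)$. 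The off-diagonal (bilinear) contribution is governed by the linking of the two orbits; invoking the relationship between the relative intersection pairing and linking numbers recorded in \S\ref{sec:topological} and Definition~\ref{def:rln}, this cross term contributes $2\ell(\gamma_1,\gamma_2)m_1m_2$, again up to a linear error. The subtlety is keeping the trivialization $\tau$ fixed across the diagonal and off-diagonal pieces so that the change-of-trivialization ambiguities cancel correctly and the linking number appears with the normalization of \eqref{eqn:rln}; the quadratic scaling of $Q_\tau$ in the relative class, noted after \eqref{eqn:defqitau}, is what guarantees the coefficients are exactly $Q_{i,\tau}$ and $2\ell(\gamma_1,\gamma_2)$ rather than something that drifts with $m_i$. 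Combining the three pieces and recalling $\phi_i = Q_{i,\tau}+\theta_{i,\tau}$ from \eqref{eqn:phii} yields \eqref{eqn:Iapprox}.
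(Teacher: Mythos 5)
Your proposal is correct and follows essentially the same route as the paper's proof: expand the definition \eqref{eqn:I}, use linearity of $c_\tau$ and quadraticity of $Q_\tau$ in the relative homology class (which the paper justifies by citing \cite[\S4.2]{wh}) to extract the $Q_{i,\tau}m_i^2$ and $2\ell(\gamma_1,\gamma_2)m_1m_2$ terms, and approximate $\floor{k\theta_{i,\tau}}+\ceil{k\theta_{i,\tau}}$ by $2k\theta_{i,\tau}$ up to an $O(m_1+m_2)$ error. The only cosmetic difference is that the paper records the $c_\tau$ contribution exactly as $\sum_i m_i c_{i,\tau}$ (with $c_{i,\tau}$ well defined because $c_1(\xi)$ is torsion) before absorbing it into the error, whereas you absorb it immediately.
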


\begin{proof}
Let $l_i$ be a positive integer with $l_i[\gamma_i]=0$. Similarly to \eqref{eqn:defqitau}, define
\[
c_{i,\tau} \eqdef \frac{c_\tau(\gamma_i^{l_i})}{l_i} \in\Q,
\]
where $c_\tau(\gamma_i^{l_i})$ is shorthand for $c_\tau(\gamma_i^{l_i},\emptyset,Z)$
for any $Z\in H_2(Y,\gamma_i^{l_i},\emptyset)$. Then $c_{i,\tau}$ does not depend on $Z$ by \eqref{eqn:camb} since $c_1(\xi)$ is torsion, and it is independent of the choice of $l_i$ because $c_\tau$ is linear in the relative homology class $Z$.

It follows from the definition of the ECH index and the facts that $c_\tau$ and $Q_\tau$ are linear and quadratic in the relative homology class (see \cite[\S4.2]{wh}) that
\[
I(\gamma_1^{m_1}\gamma_2^{m_2}) = \sum_{i=1}^2(m_ic_{i,\tau}+m_i^2Q_{i,\tau}) + 2m_1m_2\ell(\gamma_1,\gamma_2) + \sum_{i=1}^2\sum_{k=1}^{m_i}(\floor{k\theta_{i,\tau}}+\ceil{k\theta_{i,\tau}}),
\]
where $Q_{i,\tau}$ and $\theta_{i,\tau}$ are as in \eqref{eqn:phii}. Plugging in the approximation
\[
\sum_{i=1}^2\sum_{k=1}^{m_i}(\floor{k\theta_{i,\tau}}+\ceil{k\theta_{i,\tau}}) = \sum_{i=1}^2m_i^2\theta_{i,\tau} + O(m_1+m_2)
\]
then gives \eqref{eqn:Iapprox}.
\end{proof}

\subsection{Using the Volume Property}

\begin{lemma}
\label{lem:quadratic}
We have
\[
\phi_i = \frac{T_i^2}{\op{vol}(Y,\lambda)},\quad\quad\quad
\ell(\gamma_1,\gamma_2) = \frac{T_1T_2}{\op{vol}(Y,\lambda)}.
\]
\end{lemma}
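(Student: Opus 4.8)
The plan is to read off the three identities from the Volume Property (Proposition~\ref{prop:spectral}(c)) by comparing, for a suitable sequence of ECH generators, their symplectic action against their ECH index. Set $V=\op{vol}(Y,\lambda)$, and for $(m_1,m_2)\in\Z_{\ge0}^2$ write $L(m_1,m_2)=m_1T_1+m_2T_2$ and $Q(m_1,m_2)=\phi_1m_1^2+\phi_2m_2^2+2\ell(\gamma_1,\gamma_2)m_1m_2$, so that the assertion of the lemma is precisely that the quadratic form $F:=VQ-L^2$ vanishes identically. Since $c_1(\xi)$ is torsion by Lemma~\ref{lem:torsion}, Proposition~\ref{prop:Useq} provides a $U$-sequence $\{\sigma_k\}$ for $\Gamma=0$, and by Remark~\ref{rem:absolute} each $\sigma_k$ has a well-defined absolute grading, equal to $2k+O(1)$ because the $U$-map lowers the grading by $2$. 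By Proposition~\ref{prop:spectral}(a) the spectral invariant $c_k:=c_{\sigma_k}(Y,\lambda)$ is the action of a nullhomologous orbit set, necessarily $\alpha_k=\gamma_1^{m_{1,k}}\gamma_2^{m_{2,k}}$ with $m_{1,k}[\gamma_1]+m_{2,k}[\gamma_2]=0$; thus $c_k=L(m_{1,k},m_{2,k})$.

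First I would identify the ECH index of $\alpha_k$ with the grading $2k$, up to a controlled error. Approximating $\lambda$ by nondegenerate forms $\lambda_n=f_n\lambda$ with $f_n\to1$, the invariant $c_{\sigma_k}(Y,\lambda_n)$ is attained by an honest ECH generator of absolute grading $2k+O(1)$ converging as a current to $\alpha_k$; Proposition~\ref{prop:perturb} (with $\beta=\emptyset$) then gives $I(\alpha_k)=2k+O(m_{1,k}+m_{2,k})$. Because $T_1,T_2>0$ we have $m_{1,k}+m_{2,k}=O(c_k)$, and the Volume Property gives $c_k^2=2Vk(1+o(1))$, so $c_k\sim\sqrt{2Vk}$ and $m_{1,k}+m_{2,k}=O(\sqrt k)$; in fact $(m_{1,k}+m_{2,k})^2\asymp k$. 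Feeding these into Lemma~\ref{lem:cei}, which gives $Q(\alpha_k)=I(\alpha_k)+O(m_{1,k}+m_{2,k})$, yields $Q(\alpha_k)=2k+O(\sqrt k)$, while $L(\alpha_k)^2=c_k^2=2Vk+o(k)$. Subtracting, $F(\alpha_k)=VQ(\alpha_k)-L(\alpha_k)^2=o(k)=o\big((m_{1,k}+m_{2,k})^2\big)$. Equivalently, writing $u_k$ for the unit vector in the direction of $(m_{1,k},m_{2,k})$ and using that $F$ is homogeneous of degree $2$, we have $F(u_k)\to0$.

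The remaining and, I expect, hardest step is to upgrade $F(u_k)\to0$ to $F\equiv0$. I would argue by contradiction. If $F\not\equiv0$ then, as a nonzero quadratic form in two variables, its zero set meets the arc $\{u:|u|=1,\ u\in\R_{\ge0}^2\}$ in at most two points; since $F$ is continuous and $F(u_k)\to0$, every accumulation point of $\{u_k\}$ lies in this finite set, so for each $\epsilon>0$ all but finitely many $\alpha_k$ point within angle $\epsilon$ of these (at most two) rays. On the other hand, by Proposition~\ref{prop:spectral}(b) the actions $c_k$ are strictly increasing (as there are only finitely many simple Reeb orbits), so the $\alpha_k$ are distinct lattice points, and by the Volume Property the number of them with $c_k\le R$ is $\sim R^2/(2V)$, which grows quadratically in $R$. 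But the lattice points lying in the triangle $\{L\le R\}$ and within angle $\epsilon$ of finitely many fixed rays number at most $C\epsilon R^2+O(R)$. Combining, $R^2/(2V)\le C\epsilon R^2+O(R)+O(1)$; dividing by $R^2$, letting $R\to\infty$ and then $\epsilon\to0$ forces $1/(2V)\le0$, a contradiction. Hence $F\equiv0$, which is exactly the three stated identities. The essential difficulty is this last equidistribution estimate: the first two paragraphs only confine the directions $u_k$ asymptotically to the zero locus of $F$, and the extra leverage that prevents them from concentrating there comes from pitting the quadratic growth of the spectral counting function (the Volume Property) against the fact that an $\epsilon$-neighborhood of finitely many rays captures only an $O(\epsilon)$ fraction of the lattice points in a triangle.
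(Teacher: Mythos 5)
Your proposal is correct and follows essentially the same route as the paper's proof: a $U$-sequence with the Volume Property, Proposition~\ref{prop:perturb} together with Lemma~\ref{lem:cei} to identify the grading $2k+O(1)$ with the quadratic form in $(m_{1,k},m_{2,k})$ up to $O(m_{1,k}+m_{2,k})$, and then a lattice-point counting argument pitting the quadratic growth of the number of sequence points in the triangle $\{m_1T_1+m_2T_2\le R\}$ against the $O(\epsilon R^2)$ capacity of thin cones around the (at most two) null rays of the quadratic form. The only differences are cosmetic: the paper normalizes $\op{vol}(Y,\lambda)=1$ and phrases the last step as producing at least three accumulation points of $m_{2,k}/m_{1,k}$, whereas you keep $V$ general and argue by direct contradiction, but the underlying estimates are identical.
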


\begin{proof}
Both sides of the above equations are invariant under scaling the contact form by a positive constant, so we may assume without loss of generality that $\op{vol}(Y,\lambda)=1$.

By Proposition~\ref{prop:Useq} and Lemma~\ref{lem:torsion}, we can choose a $U$-sequence $\{\sigma_k\}_{k\ge 1}$ for $\Gamma=0$. Since the $U$ map has degree $-2$, there is a constant $C\in\Z$ such that for each positive integer $k$ the class $\sigma_k$ has grading $C+2k$. By Proposition~\ref{prop:spectral}(a), for each positive integer $k$ there are nonnegative integers $m_{1,k}, m_{2,k}$ such that
\begin{equation}
\label{eqn:csk}
c_{\sigma_k}(Y,\lambda) = m_{1,k}T_1 + m_{2,k}T_2.
\end{equation}
By the Volume Property of Proposition~\ref{prop:spectral}(c), we have
\begin{equation}
\label{eqn:2k1}
2k = (m_{1,k}T_1 + m_{2,k}T_2)^2 + o(k).
\end{equation}

Fix $k$ and write $\alpha_k=\gamma_1^{m_{1,k}}\gamma_2^{m_{2,k}}$. If $\lambda'$ is a sufficiently $C^2$ close nondegenerate perturbation of $\lambda$, then by the same compactness argument that proves Proposition~\ref{prop:spectral}(a), there is an orbit set $\alpha_k'$ close to $\alpha_k$ as a current such that $I(\alpha_k')=C+2k$ (and also $\int_{\alpha_k'}\lambda'$ is close to $c_{\sigma_k}(Y,\lambda)$, although we do not need this). By Proposition~\ref{prop:perturb}, we have
\[
C + 2k = I(\alpha_k) + O(m_{1,k} + m_{2,k}).
\]
Combining this with Lemma~\ref{lem:cei}, we get
\begin{equation}
\label{eqn:2k2}
2k = \phi_1m_{1,k}^2 + \phi_2m_{2,k}^2 + 2\ell(\gamma_1,\gamma_2)m_{1,k}m_{2,k} + O(m_{1,k} + m_{2,k}).
\end{equation}

Putting together \eqref{eqn:2k1} and \eqref{eqn:2k2}, we obtain
\[
(\phi_1-T_1^2)m_{1,k}^2 + (\phi_2-T_2^2)m_{2,k}^2 + 2(\ell(\gamma_1,\gamma_2)-T_1T_2)m_{1,k}m_{2,k} = O(m_{1,k}+m_{2,k}) + o(k).
\]
Consequently, if the sequence $(m_{2,k}/m_{1,k})_{k\ge 1}$ has an accumulation point $S\in[0,\infty]$, then the line in the $x,y$ plane of slope $S$ through the origin is in the null space of the quadratic form
\[
f(x,y) = (\phi_1-T_1^2)x^2 + (\phi_2-T_2^2)y^2 + 2(\ell(\gamma_1,\gamma_2)-T_1T_2)xy.
\]
To complete the proof of the lemma, it now suffices to show that the sequence $(m_{2,k}/m_{1,k})_{k\ge 1}$ has at least three accumulation points, as then the quadratic form $f$ must vanish identically. We claim that in fact this sequence has infinitely many accumulation points.

If the sequence has only finitely many accumulation points $S_1,\ldots,S_n$, then for every $\epsilon>0$, there exists $R>0$ such that every point $(m_{1,k},m_{2,k})$ is contained in the union of the disk $x^2+y^2\le R^2$ and the cones around the lines of slope $S_1,\ldots,S_n$ with angular width $\epsilon$.

Since $\lim_{k\to\infty}c_{\sigma_k}^2/k=2$, and since the points $(m_{1,k},m_{2,k})$ are pairwise distinct by Proposition~\ref{prop:spectral}(b), by equation \eqref{eqn:csk}, it follows that for large $L$, the number of points $(m_{1,k},m_{2,k})$ contained in the triangle $T_1x+T_2y\le L, \; x\ge 0, \; y\ge 0$ is approximately $L^2/2$. As a result, there exists $\delta>0$ such that for all $L$ sufficiently large, the fraction of lattice points in the above triangle that are contained in the sequence $(m_{1,k},m_{2,k})_{k\ge 1}$ is at least $\delta$. This gives a contradiction if $\epsilon$ in the previous paragraph is chosen sufficiently small.
\end{proof}

\subsection{Completing the proof of nondegeneracy}
\label{sec:cpn}

\begin{proof}[Proof of Theorem~\ref{thm:nondeg}]
The ratio $T_1/T_2$ is irrational\footnote{The proof is simple: If $T_1/T_2$ is rational, so that $T_1$ and $T_2$ are both integer multiples of a single number, then Proposition~\ref{prop:spectral}(b) implies that the spectral invariants associated to a $U$-sequence grow at least linearly, contradicting the Volume Property.} by \cite[Thm.\ 1.3]{onetwo}.  Also, $\ell(\gamma_1,\gamma_2)$ is rational by the definition \eqref{eqn:rln}. It then follows from Lemma~\ref{lem:quadratic} that $\phi_1$ and $\phi_2$ are irrational.

By \eqref{eqn:phii}, since $Q_{i,\tau}$ is rational, it follows that the rotation number $\theta_{i,\tau}$ is irrational. Then $P_{\gamma_i}$ has eigenvalues $e^{\pm 2\pi i\theta_{i,\tau}}$, so the Reeb orbits $\gamma_i$ are irrationally elliptic. As explained in \S\ref{sec:sor}, it follows that all covers of $\gamma_i$ are nondegenerate, so $\lambda$ is nondegenerate.
\end{proof}


\section{Additional dynamical information}
\label{sec:fc}

To finish up, we now prove Theorem~\ref{thm:dynamics}.

To prepare for the proof, recall that if $Y$ is a closed oriented three-manifold, if $\xi$ is a contact structure on $Y$ with $c_1(\xi)=0\in H^2(Y;\Z)$, and if $\gamma$ is a nullhomologous transverse knot, then the {\em self-linking number\/} $\op{sl}(\gamma)\in\Z$ is defined to be the difference between the Seifert framing (see Remark~\ref{rmk:seifertframing}) and the framing given by a global trivialization of $\xi$. In the notation of \S\ref{sec:cei}, we have
\begin{equation}
\label{eqn:sl}
\op{sl}(\gamma) = Q_\tau(\gamma) - c_\tau(\gamma)
\end{equation}
where $\tau$ is any trivialization of $\xi|_\gamma$.

Now suppose that $\gamma$ above is a simple Reeb orbit. Let $\phi(\gamma)\in\R$ denote the rotation number of $\gamma$ with respect to the Seifert framing as in \S\ref{sec:cei}, and let $\theta(\gamma)\in\R$ denote the rotation number of $\gamma$ with respect to a global trivialization of $\xi$. Also let
\[
\op{CZ}(\gamma) = \floor{\theta(\gamma)}+\ceil{\theta(\gamma)}\in \Z
\]
denote the Conley-Zehnder index of $\gamma$ with respect to a global trivialization.
It follows from \eqref{eqn:sl} that
\begin{equation}
\label{eqn:phitheta}
\phi(\gamma) = \theta(\gamma) + \op{sl}(\gamma).
\end{equation}

\begin{proof}[Proof of Theorem~\ref{thm:dynamics}]
By Corollary~\ref{cor:lensspace}, $\gamma_1$ and $\gamma_2$ are the core circles of a genus one Heegaard splitting of $Y$. It follows from this topological description that $\ell(\gamma_1,\gamma_2)=1/p$. Part (a) of the theorem then follows from Lemma~\ref{lem:quadratic}.

To prove part (b), suppose first that $Y = S^3$. We know from Theorem~\ref{thm:nondeg} that $\lambda$ is nondegenerate and there are no hyperbolic Reeb orbits. Then $\xi$ is tight, because otherwise \cite[Thm.\ 1.4]{unknotted} would give a hyperbolic Reeb orbit. Moreover, it follows from \cite[Thm.\ 1.3]{HS}, combined with \cite[Thm.\ 1.4]{unknotted} and the fact that there are no Reeb orbits with $\op{CZ}=2$ (since Reeb orbits with even Conley-Zehnder index have integer rotation number and thus are hyperbolic), that one of the simple Reeb orbits, say $\gamma_1$, satisfies ${\rm sl}(\gamma_1)=-1$ and $\op{CZ}(\gamma_1)=3$, and is the binding of an open book decomposition with pages that are disk-like global surfaces of section for the Reeb flow. The return map on a page preserves an area form with finite total area, hence it has a fixed point by Brouwer's Translation Theorem. This fixed point corresponds to the simple Reeb orbit $\gamma_2$, which is transverse to the pages of the open book. Since on $S^3\setminus\gamma_1$ the tangent spaces of the pages define a distribution that is isotopic to $\xi$ keeping transversality with the Reeb direction, we get ${\rm sl}(\gamma_2)=-1$. Since $\op{CZ}(\gamma_1)=3$, we have $\theta(\gamma_1)\in(1,2)$, so by equation \eqref{eqn:phitheta} we have $\phi_1\in(0,1)$. By Lemma~\ref{lem:quadratic} as used in equation \eqref{eqn:actionindex}, we have $\phi_1\phi_2=1$, so $\phi_2>1$. By equation \eqref{eqn:phitheta} again we have $\theta(\gamma_2)>2$. It follows that all iterates of $\gamma_1$ and $\gamma_2$ have $\theta > 1$, so $\lambda$ is dynamically convex.

To prove part (b) in the general case, let $\tilde\lambda$ denote the pullback of the contact form $\lambda$ to the universal cover $S^3$ of $Y$. It follows from the Heegaard decomposition that $\gamma_1$ and $\gamma_2$ each have order $p$ in $\pi_1(Y)$. Consequently $\widetilde{\lambda}$ has exactly two simple Reeb orbits $\widetilde{\gamma_1}$ and $\widetilde{\gamma_2}$, which project to $\gamma_1$ and $\gamma_2$ as $p$-fold coverings. By the previous paragraph, $(S^3,\widetilde{\lambda})$ is dynamically convex and tight, and it follows that $(Y,\lambda)$ is dynamically convex and universally tight.
\end{proof}


\bigskip

{\footnotesize

{\sc Dan Cristofaro-Gardiner}

University of California, Santa Cruz, and

School of Mathematics, Institute for Advanced Study, Princeton NJ, USA

{\em dcristof@ucsc.edu}

\medskip

{\sc Umberto Hryniewicz}

RWTH Aachen, Jakobstrasse 2, Aachen 52064, Germany

{\em hryniewicz@mathga.rwth-aachen.de}

\medskip

{\sc Michael Hutchings}

University of California, Berkeley

{\em hutching@math.berkeley.edu\/}

\medskip

{\sc Hui Liu}

School of Mathematics and Statistics, Wuhan University, Wuhan 430072, Hubei, P.R. China

{\em huiliu00031514@whu.edu.cn}

}

\end{document}